\newtheorem{theorem}{Theorem}[section]
\newtheorem{problem}[theorem]{Problem}
\newtheorem{lemma}[theorem]{Lemma}
\begin{document}

\title{Embeddings into almost self-centered graphs of  given radius}

\author{
Kexiang Xu $^{a}$
\and
Haiqiong Liu $^{a}$
\and
Kinkar Ch. Das $^{b}$
\and
Sandi Klav\v zar $^{c,d,e}$
}

\date{}

\maketitle

\begin{center}
$^a$ College of Science, Nanjing University of Aeronautics \& Astronautics\\
Nanjing, Jiangsu, 210016, PR China \\
{\tt kexxu1221@126.com} (K. Xu) \\
{\tt lhqstu@163.com} (H. Liu)
\medskip

$^b$ Department of Mathematics, Sungkyunkwan University\\
Suwon 440-746, Republic of Korea \\
{\tt kinkardas2003@gmail.com}
\medskip

$^c$ Faculty of Mathematics and Physics, University of Ljubljana, Slovenia\\
{\tt sandi.klavzar@fmf.uni-lj.si}
\medskip

$^d$ Faculty of Natural Sciences and Mathematics, University of Maribor, Slovenia\\
\medskip

$^e$ Institute of Mathematics, Physics and Mechanics, Ljubljana, Slovenia
\end{center}


\begin{abstract}
A graph is almost self-centered (ASC) if all but two of its vertices are central. An almost self-centered graph with radius $r$ is called an $r$-ASC graph. The $r$-ASC index $\theta_r(G)$ of a graph $G$ is the minimum number of vertices needed to be added to $G$ such that an $r$-ASC graph is obtained that contains $G$ as an induced subgraph.  It is proved that $\theta_r(G)\le 2r$ holds for any graph $G$ and any $r\ge 2$ which improves the earlier known bound $\theta_r(G)\le 2r+1$. It is further proved that $\theta_r(G)\le 2r-1$ holds if  $r\geq 3$ and $G$ is of order at least $2$. The $3$-ASC index of complete graphs is determined. It is proved that $\theta_3(G)\in \{3,4\}$ if $G$ has diameter $2$  and for several classes of graphs of diameter $2$ the exact value  of the $3$-ASC index is  obtained. For instance, if a graph $G$ of diameter $2$ does not contain a diametrical triple, then $\theta_3(G) = 4$. The $3$-ASC index of paths  of order $n\geq 1$, cycles  of order $n\geq 3$, and trees of order $n\geq 10$ and diameter $n-2$ are also determined, respectively, and several open problems proposed.
\end{abstract}

\noindent {\bf Key words:} eccentricity; diameter; almost self-centered graph;  graph of diameter $2$; ASC index

\medskip\noindent
{\bf  Math. Subj. Class. (2010)}: 05C12, 05C05, 05C75

 \section{Introduction and preliminaries}

Almost self-centered graphs (ASC graphs for short) were introduced in~\cite{KNW2011} as the graphs  which contain exactly two non-central vertices. If the radius of an ASC graph $G$ is $r$, then $G$ is more precisely called an $r$-ASC graph. The introduction of these graphs was in particular motivated with  network designs in which two (expensive) resources that need to be far away due to interference reasons must be installed.  Numerous additional situations appear in which two specific locations are desired, for instance in trees~\cite{hl-00,ww-09}, block graphs~\cite{alcon-2017, cheng-2014}, and interval graphs~\cite{hong-2014}. While in the first two of these papers 2-centers are studied in trees, the results could be used also in the general case. For a given graph $G$, one could
first select a suitable subtree (spanning or otherwise), then determine a 2-center, and finally lift the obtained solution to $G$.

Graphs dual to the almost self-centered graphs were studied in~\cite{KNWL2014} and named almost-peripheral graphs (AP graphs for short). Very recently, a measure of non-self-centrality was introduced in~\cite{xu-2016+}, where ASC graphs and AP graphs, along with a newly defined weakly AP graphs, play a significant role as extremal graphs in studies of this new measure.

Among other results it was proved in~\cite{KNW2011} that for any connected graph $G$ and any $r\ge 2$ there exists an $r$-ASC graph which contains $G$ as an induced subgraph. Consequently, the {\em $r$-ASC index} $\theta_r(G)$ of $G$ was introduced as the minimum number of vertices needed to add to $G$ in order to obtain an $r$-ASC graph that contains $G$ as an induced subgraph. It was proved that for any connected graph $G$ we have $\theta_2(G)\le 2$ and that $\theta_r(G)\le 2r+1$ holds for any $r\ge 3$. Moreover, the $2$-ASC index was determined exactly for complete bipartite graphs, paths and cycles. The study of ASC graphs was continued in~\cite{BBCK2012} where ASC graphs were characterized among chordal graphs and among median graphs, and in~\cite{KLSX-2017+} where existing constructions of ASC graphs were revisited and new constructions of such graphs  were developed. See also \cite{Gu-1993} for some related embeddings.

We note in passing that graph eccentricity is frequently applied in chemical graph theory via the so-called eccentricity-based topological indices. Although these indices are pure graph theory concepts, they have a substantial use in theoretical chemistry, cf.~\cite{SGM1997}. The eccentricity-based topological indices include eccentric
distance sum~\cite{IYF2011}, Zagreb eccentricity indices~\cite{DLG2013}, and connective eccentricity index~\cite{XDL2016,YQTF2014}. See also  \cite{DN2016} for some recent results on the general distance-based topological indices and a recent survey~\cite{XLDGF2014} on extremal results on general distance-based topological indices. Moreover, the applications of eccentricity to networks and location theory can be seen in~\cite{krnc-2015} and in~\cite{puerto-2008}, respectively.

This paper is organized as follows. In the rest of this section definitions and notations needed are given.  In the following section we first prove that if $G$ is a graph of order $n\ge 1$ and $r\ge 2$, then $\theta_r(G)\le 2r$, thus improving the so-far best known bound $\theta_r(G)\le 2r+1$. In the main result of the section we further strengthen the bound by proving that $\theta_r(G) \le 2r-1$ holds if $r\ge 3$ and $G$ has at least two vertices. In the rest of the paper we then consider the $3$-ASC index. The obtained results and (new) techniques involved indicate that to determine the $3$-ASC index of a graph is generally a much more complex task than to determine its $2$-ASC index. In Section~\ref{sec:small-diam}, we determine the $3$-ASC index of complete graphs and  investigate the index on graphs of diameter $2$.  We prove that if $G$ is such a graph, then either $\theta_3(G) = 3$ or $\theta_3(G) = 4$. For several classes of graphs of diameter $2$ the exact value is determined. In Section~\ref{sec:large-diam} we turn our attention to graphs with large diameter and determine the $3$-ASC index of paths, cycles, and trees of order $n$ and diameter $n-2$. We conclude the paper with several open problems.

We only consider finite, undirected, simple graphs throughout this paper. The \textit{degree}  ${\rm deg}_G(v)$ of a vertex $v$ of a graph $G$ is the cardinality of the neighborhood $N_G(v)$ of $v$. The closed neighborhood $N_G[v]$ of $v$ is $N_G(v)\cup \{v\}$. The maximum and minimum degree of $G$  are denoted by $\Delta(G)$ and $\delta(G)$, respectively. A vertex of degree $1$ is called a \textit{pendant vertex}, the edge incident with  a pendant vertex is called a \textit{pendant edge}.

The {\em distance} $d_G(u,v)$ between vertices $u$ and $v$ is the length (that is, the number of edges) of a shortest $(u,v)$-path. The \textit{eccentricity} ${\rm ecc}_G(u)$ of a vertex $u$ is the maximum distance from $u$ to other vertices in $G$. The \textit{diameter} ${\rm diam}(G)$ of $G$ is the maximum eccentricity among its vertices and the \textit{radius} ${\rm rad}(G)$ is the minimum eccentricity of its vertices. See~\cite{MW-2000} for some properties of the vertex eccentricity in a graph.  A vertex $v$ is an \textit{eccentric vertex} of a vertex $u$ if $d_G(u,v)={\rm ecc}_G(u)$. The \textit{eccentric set} ${\rm Ecc}_G(u)$ of $u$ is the set of all its eccentric vertices. A vertex $u$ of $G$ is called a \textit{central vertex} if ${\rm ecc}_G(u)={\rm rad}(G)$ and is called a \textit{diametrical} vertex if ${\rm ecc}_G(u)={\rm diam}(G)$. The {\em center} $C(G)$ of $G$ consists of all its central vertices, while the {\em periphery} $P(G)$ of $G$ contains all its diametrical vertices.  A graph $G$ is {\em self-centered} (SC graph for short) if $C(G) = V(G)$; we refer to the survey on SC graphs~\cite{buckley-1989} and to their  recent application in~\cite{madalloni-2016}. If ${\rm rad}(G) = r$, then we also say that $G$ is an $r$-SC graph. Vertices $u$ and $v$ with $d_G(u,v)={\rm diam}(G)$ will be referred to as a {\em diametrical pair}. Similarly, a triple of vertices that are pairwise at distance ${\rm diam}(G)$ is called a {\em diametrical triple}. When no ambiguity  occurs, the subscript $G$ will be omitted in the notations from the  last two paragraphs.

If $G$ is a graph, then $\overline{G}$ denotes the complement of $G$. The {\em disjoint union} of (vertex-disjoint) graphs $G_1$ and $G_2$ will be denoted with $G_1\cup G_2$, while the \textit{join} of $G_1$ and $G_2$ will be denoted by $G_1\oplus G_2$. Recall that $G_1\oplus G_2$ is obtained from $G_1\cup G_2$ by adding an edge between any vertex of $G_1$ and any vertex of $G_2$. If $X\subseteq V(G)$, then $G[X]$ denotes the subgraph of $G$ induced by $X$. Finally, throughout this paper we use $P_n$, $C_n$, and $K_n$ to denote the path graph, the cycle graph, and the complete graph on $n$ vertices, respectively.

\section{Improving upper bounds on the $r$-ASC index}
\label{sec:improve-upper-bounds}

It was proved in~\cite[Corollary 4.1]{KNW2011} that $\theta_r(G)\leq 2r+1$ holds for any graph $G$. In this section we first sharpen this result as follows.  Interestingly, the  below construction which gives a better upper bound is simpler than the construction from~\cite{KNW2011}.

\begin{theorem}
\label{thm:constuction}
If $G$ is a graph of order $n\ge 1$ and $r\ge 2$, then $\theta_r(G)\le 2r$.
\end{theorem}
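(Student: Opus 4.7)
The plan is to exhibit, for any graph $G$ of order $n\geq 1$ and any $r\geq 2$, an explicit $r$-ASC graph $H$ containing $G$ as an induced subgraph with $|V(H)|=n+2r$. The motivating observation is that the cycle $C_{2r}$ with a single pendant vertex attached at one cycle vertex is already an $r$-ASC graph on $2r+1$ vertices, whose two non-central vertices are the pendant itself and the cycle-vertex antipodal to the pendant's neighbour. My construction ``inflates'' that one cycle vertex into a copy of $G$.

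Concretely, I would take $V(H)=V(G)\cup\{v_1,v_2,\ldots,v_{2r-1},p\}$ (so exactly $2r$ new vertices) and let the edges of $H$ be: (i) all edges of $G$; (ii) the path $v_1v_2\cdots v_{2r-1}$; and (iii) all edges joining each of $v_1$, $v_{2r-1}$, and $p$ to every vertex of $V(G)$. Then $G$ is induced in $H$, and contracting $V(G)$ to a single vertex produces $C_{2r}$ with pendant $p$.

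The verification that $H$ is $r$-ASC reduces to three eccentricity computations. First, any two vertices of $G$ have $v_1$ as a common neighbour, so $d_H(g,g')\leq 2$; one also has $d_H(g,p)=1$ and $d_H(g,v_i)=\min(i,2r-i)$, giving $\mathrm{ecc}_H(g)=r$ for every $g\in V(G)$. Second, for $1\leq i<j\leq 2r-1$ one compares the direct subpath $v_iv_{i+1}\cdots v_j$ (length $j-i$) with the detour through any $g\in V(G)$ via $v_1$ and $v_{2r-1}$ (length $2r-(j-i)$) to obtain $d_H(v_i,v_j)=\min(j-i,\,2r-(j-i))\leq r$; combined with $d_H(v_i,V(G))=\min(i,2r-i)$ and $d_H(v_i,p)=1+\min(i,2r-i)$, this forces $\mathrm{ecc}_H(v_i)=r$ for every $i\neq r$ and $\mathrm{ecc}_H(v_r)=r+1$. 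Third, $\mathrm{ecc}_H(p)=1+\max_i\min(i,2r-i)=r+1$. Hence $\mathrm{rad}(H)=r$ and the only non-central vertices are $v_r$ and $p$, so $H$ is $r$-ASC and $\theta_r(G)\leq 2r$.

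The one point that deserves verification is ruling out alternative short paths among the $v_i$'s, in particular detours of the form $v_i\text{-}g\text{-}p\text{-}g'\text{-}v_j$ of length $\min(i,2r-i)+2+\min(j,2r-j)$, which by the triangle inequality on the underlying cycle is always at least $2$ more than the cycle distance and so never shorter. Beyond this bookkeeping, the construction is rigid enough that the inequality $1+\min(i,2r-i)\leq r$ holds exactly for $i\neq r$, which is precisely what leaves $v_r$ and $p$ as the only two non-central vertices.
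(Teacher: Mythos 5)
Your construction is exactly the paper's construction $\widehat{G}_r$ up to relabelling: your path $v_1,\ldots,v_{2r-1}$ is the paper's $x_1,\ldots,x_{r-1},w',y_{r-1},\ldots,y_1$ and your $p$ is the paper's $w$, and both arguments verify the eccentricities via the isometric $2r$-cycle through each vertex of $G$. The proof is correct and takes essentially the same approach, with your eccentricity bookkeeping being somewhat more explicit than the paper's.
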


\begin{proof}
Let $\widehat{G}_r$ be the graph obtained from $G$ as follows. Add three new vertices $w$, $x_1$, $y_1$, and add an edge between each of them and any vertex of $G$, so that in this way $3n$ edges are added. Further, add vertices $x_2, \ldots, x_{r-1}$, $y_2, \ldots, y_{r-1}$, and $w'$. Finally,  add edges $x_1x_2, \ldots, x_{r-2}x_{r-1}$, $y_1y_2, \ldots, y_{r-2}y_{r-1}$, $x_{r-1}w'$, and $y_{r-1}w'$. The construction is illustrated in Fig.~\ref{fig:2r-construction}.

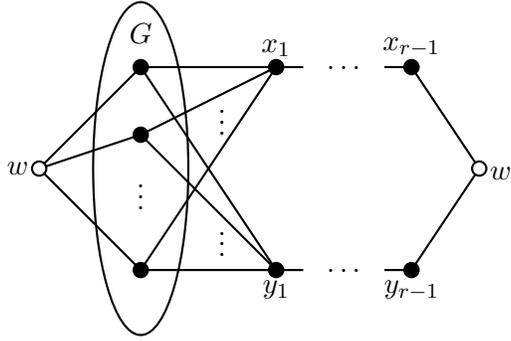
\begin{figure}[ht!]
\begin{center}
\begin{tikzpicture}[scale=0.9,style=thick]
\def\vr{3pt}
\path (-0.5,1.5) coordinate (a);
\path (1,0) coordinate (d);
\path (1,2) coordinate (c);
\path (1,3) coordinate (b);
\path (3,0) coordinate (u);
\path (3,3) coordinate (x);
\path (5,0) coordinate (v);
\path (5,3) coordinate (y);
\path (6,1.5) coordinate (z);
\draw (a) -- (b); \draw (a) -- (c); \draw (a) -- (d);
\draw (b) -- (x); \draw (d) -- (u);
\draw (y) -- (z) -- (v);
\draw (x) -- (c);
\draw (x) -- (d);
\draw (u) -- (b);
\draw (u) -- (c);
\draw (x) -- (3.4,3);
\draw (u) -- (3.4,0);
\draw (4.6,3) -- (y);
\draw (4.6,0) -- (v);
\draw (a)  [fill=white] circle (\vr);
\draw (b)  [fill=black] circle (\vr);
\draw (c)  [fill=black] circle (\vr);
\draw (d)  [fill=black] circle (\vr);
\draw (x)  [fill=black] circle (\vr);
\draw (y)  [fill=black] circle (\vr);
\draw (u)  [fill=black] circle (\vr);
\draw (v)  [fill=black] circle (\vr);
\draw (z)  [fill=white] circle (\vr);
\draw (1,1.2) node {$\vdots$};
\draw (2.2,2.3) node {$\vdots$};
\draw (2.2,0.5) node {$\vdots$};
\draw (4,3) node {$\ldots$};
\draw (4,0) node {$\ldots$};
\draw (1,1.5) ellipse (20pt and 70pt);
\draw (1,3.5) node {$G$};
\draw [left] (a) node {$w$};
\draw [right] (z) node {$w'$};
\draw [above] (x) node {$x_1$};
\draw [above] (y) node {$x_{r-1}$};
\draw [below] (u) node {$y_1$};
\draw [below] (v) node {$y_{r-1}$};
\end{tikzpicture}
\end{center}
\caption{Graph $\widehat{G}_r$}
\label{fig:2r-construction}
\end{figure}

It is straightforward to verify that $d_{\widehat{G}_r}(w,w') = r+1$ and ${\rm ecc}_{\widehat{G}_r}(w) = {\rm ecc}_{\widehat{G}_r}(w')= r+1$.  If $a\in V(G)$, then the sequence of vertices $a,x_1,\ldots, x_{r-1}, w', y_{r-1}, \ldots, y_1, a$ induces an isometric cycle of length $2r$. Using this fact if follows by a simple inspection that for any vertex $x\in V(\widehat{G}_r)$, $x\ne w,w'$, we have ${\rm ecc}_{\widehat{G}_r}(x) = r$. We conclude that $\widehat{G}_r$ is an $r$-ASC graph and consequently $\theta_r(G)\le 2r$.
\end{proof}

Next we characterize the graphs $G$ with $\theta_r(G)=2r$ for any $r\ge 3$. Before doing it, we need the following result.

\begin{lemma}\label{conn-r-ASC} Let $G$ be a connected graph of order $n\geq 2$  and $r\geq 3$. Then $\theta_r(G)\leq 2r-1$.
\end{lemma}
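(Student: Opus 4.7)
The plan is to adapt the construction $\widehat{G}_r$ from the proof of Theorem~\ref{thm:constuction} so as to save exactly one added vertex by exploiting the connectivity hypothesis on $G$. In $\widehat{G}_r$ the vertex $w$ is joined to every vertex of $G$ and plays the role of one of the two non-central vertices; the idea is to eliminate $w$ altogether and instead choose a vertex $v\in V(G)$ to take over that role, using a neighbor $u\in V(G)$ of $v$ (which exists because $G$ is connected and $n\ge 2$) as the conduit between $v$ and the rest of the construction.

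Concretely, I would build a graph $H$ from $G$ by adding only the $2r-1$ new vertices $x_1,\ldots,x_{r-1}$, $y_1,\ldots,y_{r-1}$, $w'$, with exactly the internal edges used in $\widehat{G}_r$ (namely $x_ix_{i+1}$, $y_iy_{i+1}$, $x_{r-1}w'$, and $y_{r-1}w'$), and with the modification that $x_1$ and $y_1$ are joined only to $V(G)\setminus\{v\}$. Thus $v$ is deliberately left non-adjacent to every newly added vertex. To conclude that $\theta_r(G)\le 2r-1$ it suffices to verify that $H$ is an $r$-ASC graph containing $G$ as an induced subgraph. The verification splits into three steps: (i) show $d_H(v,w')=r+1$ by observing that every $(v,w')$-path must first use an edge $vu'$ with $u'\in V(G)\setminus\{v\}$ and then traverse one of the two length-$r$ paths from such a $u'$ to $w'$; (ii) show ${\rm ecc}_H(z)=r$ for every $z\in V(H)\setminus\{v,w'\}$ by the same isometric-cycle argument as in Theorem~\ref{thm:constuction}, applied to the cycle $a,x_1,\ldots,x_{r-1},w',y_{r-1},\ldots,y_1,a$ for any $a\in V(G)\setminus\{v\}$; (iii) bound $d_H(v,z)\le d_H(u,z)+1$ to control all remaining distances to $v$, and conclude ${\rm ecc}_H(v)={\rm ecc}_H(w')=r+1$.

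The main obstacle I anticipate is the eccentricity bookkeeping in (ii) and (iii): detaching $v$ from $x_1$ and $y_1$ could in principle push the eccentricity of some $x_i$, $y_i$, or $a\in V(G)\setminus\{v\}$ above $r$, and one must simultaneously rule out any shortcut that would reduce $d_H(v,w')$ below $r+1$. This is where the two hypotheses play distinct roles: the connectedness of $G$ supplies the neighbor $u$ of $v$, which is the unique conduit between $v$ and everything outside $V(G)$; and $r\ge 3$ is exactly what ensures that the length-$3$ detour from any $a\in V(G)\setminus\{v\}$ to $v$ through $x_1$ and $u$ is absorbed into the radius. Once these checks go through, $v$ and $w'$ are the only non-central vertices of $H$, and the lemma follows.
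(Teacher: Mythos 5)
Your construction is correct, and it takes a genuinely different route from the paper's. The paper also builds a $C_{2r}'$-type gadget on $2r-1$ new vertices, but there \emph{both} peripheral vertices are new ones: a pendant vertex $w$ attached to $y_1$, and the vertex $x_{r-2}$ antipodal to $y_1$ on the $2r$-cycle threaded through a chosen edge $xy$ of $G$. To keep every vertex of $G$ central while preventing shortcuts between $w$ and $x_{r-2}$, the paper must then wire the remaining vertices of $G$ into the gadget by a case analysis on $N(x)$, $N(y)$, $N_2(x)$, $N_2(y)$ and $V^*(G)$, with a further split between $r=3$ and $r\ge 4$. Your idea of demoting a vertex $v\in V(G)$ to peripheral status avoids all of that: since $v$ is permitted eccentricity $r+1$, every other vertex of $G$ can be attached uniformly to both $x_1$ and $y_1$, which pins its distance to $w'$ at exactly $r$ and its distance to $v$ at most $3\le r$. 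The checks you flag as the main obstacle do go through: $N_H(v)=N_G(v)$ forces $d_H(v,w')=1+\min_{u'\in N_G(v)}d_H(u',w')=r+1$, while $d_H(x_i,y_j)=\min\left(i+j,\,2r-i-j\right)$ gives $d_H(x_i,y_{r-i})=r$, so each $x_i$, $y_i$ and each $a\in V(G)\setminus\{v\}$ has eccentricity exactly $r$. As a bonus, for $n=2$ your graph degenerates to exactly $C_{2r}'$, so the paper's separate opening step for $K_2$ is absorbed, and the construction extends verbatim to any graph in which the chosen $v$ has a neighbour --- which is essentially what is needed again in Case 3 of Theorem~\ref{index-2r}. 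What the paper's heavier wiring buys is that all of $G$ stays in the center of the embedding; your version deliberately sacrifices one vertex of $G$ to the periphery, which is harmless for the statement being proved.
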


\begin{proof}
The graph $C_{2r}'$, which is obtained from $C_{2r}$ by attaching a pendant vertex to it, is an $r$-ASC graph. It follows that $\theta_r(K_2)\leq 2r-1$. In the following we may assume that $n\ge 3$. It suffices to construct a graph $H$ of order at most $n+2r-1$ as an $r$-ASC embedding graph of $G$.

 Now we construct a graph $H$ with vertex set $V(G)\cup\{x_1,\ldots,x_{r-1},y_1,\ldots,y_{r-1},w\}$. Set $V_0=\{x_1,\ldots,x_{r-1},y_1,\ldots,y_{r-1},w\}$. We choose two arbitrary vertices $x$ and $y$ with $xy\in E(G)$. In $H$, the adjacency relation of $V_0$ is $x_1-x_2-\cdots-x_{r-1}-y_{r-1}-\cdots-y_1-w$ and  $x,y$ are joined with $x_1$ and $y_1$, respectively.  Let $N_2(x)$ and $N_2(y)$ be the set of vertices in $G$ at distance $2$ from $x$ and $y$, respectively. For any vertex $v\in N(x)$, we join $v$ with $y_1$ in $H$. And we join $v$ with $x_1$ in $H$ for any vertex $v\in N(y)\setminus N(x)$.  Set $V^*(G)=V(G)\setminus \left(N(x)\cup N(y)\right)$ and note that $x,y\notin V^*(G)$. Next we consider the positions of all vertices (if any) from $V^*(G)$ in $H$. For any vertex $v\in V^*(G)$, if $v\in N_2(y)$, $v$ is joined with $y_1$ in $H$; if $v\in N_2(x)\setminus N_2(y)$, then $vy_2\in E(H)$;  while if $z\in V^*(G) \setminus \left(N_2(x)\cup N_2(y)\right)$, then $zy_1,zy_3\in E(H)$ for $r\geq 4$ or $zy_1,zx_2\in E(H)$ for $r=3$.

 This construction is illustrated in Fig.~\ref{fig:at-most-2r-1}, where a thick edge from a vertex to a thick set indicates that the vertex is adjacent to every vertex in the set. The dashed thick line from $x_1$ indicates that $x_1$ is adjacent to every vertex in $N(y)\setminus N(x)$, while the dashed line from  $y_2$ says that $y_2$ is adjacent to every vertex in $N_2(x)\setminus N_2(y)$. The figure is drawn for the cases when $r\ge 4$. If $r=3$ then one only needs to replace the edge $zy_3$ (that is, from a typical vertex $z$ from $V^*(G) \setminus \left(N_2(x)\cup N_2(y)\right)$ to $y_3$ with the edge $zx_2$. Note that if $r=3$ then the vertices $y_3$ and $x_3$ do not exist, so that we can imagine that in the case $r=3$ the vertex $y_3$ is identified with $x_2$ (and $x_3$ with $y_2$) and then the edge $zy_3$ is just the edge $zx_2$.

\begin{figure}[ht!]
\begin{center}
\begin{tikzpicture}[scale=1.3,style=thick]
\def\vr{3pt}
\path (0,0) coordinate (x);
\path (1,0) coordinate (y);
\path (-4,4) coordinate (y1);
\path (-4.5,5) coordinate (w);
\path (-3,5.7) coordinate (y2);
\path (-2,5.7) coordinate (y3);
\path (0,5.7) coordinate (yr-1);
\path (1,5.7) coordinate (xr-1);
\path (5,4) coordinate (x1);
\path (4,5.7) coordinate (x2);
\path (3,5.7) coordinate (x3);
\path (-2,4) coordinate (z);
\draw (x) -- (y);
\draw (w) -- (y1) -- (y2) -- (y3);
\draw (x3) -- (x2) --(x1);
\draw (yr-1) -- (xr-1);
\draw (x) .. controls (2,-1.5) and (4,1) .. (x1);
\draw (y) .. controls (-1,-1.5) and (-3,1) .. (y1);
\draw (x) -- (-1.3,1.3);
\draw (x) -- (0.3,1.5);
\draw (y) -- (0.7,1.5);
\draw (y) -- (2.3,1.3);
\draw (-1,1.7) -- (-1,3.5);
\draw (2,1.7) -- (2,3.5);
\draw (-0.5,1.8) -- (0.3,3.5);
\draw (1.5,1.8) -- (0.7,3.5);
\draw[line width=1mm] (-1.3,1.7) -- (y1);
\draw[line width=1mm, dashed] (2.3,1.7) -- (x1);
\draw[line width=1mm] (1.4,3.8) .. controls (0,5) and (-1.2,5) .. (y1);
\draw[line width=1mm, dashed] (-0.4,3.8) -- (y2);
\draw (y1) -- (z) -- (y3);
\draw (y3) -- (-1.6,5.7);
\draw (yr-1) -- (-0.4,5.7);
\draw (x3) -- (2.6,5.7);
\draw (xr-1) -- (1.4,5.7);
\draw (x)  [fill=white] circle (\vr);
\draw (y)  [fill=white] circle (\vr);
\draw (w)  [fill=white] circle (\vr);
\draw (x1)  [fill=white] circle (\vr);
\draw (x2)  [fill=white] circle (\vr);
\draw (x3)  [fill=white] circle (\vr);
\draw (y1)  [fill=white] circle (\vr);
\draw (y2)  [fill=white] circle (\vr);
\draw (y3)  [fill=white] circle (\vr);
\draw (xr-1)  [fill=white] circle (\vr);
\draw (yr-1)  [fill=white] circle (\vr);
\draw (z)  [fill=white] circle (\vr);
\draw[line width=0.7mm] (-0.5,1.5) ellipse (40pt and 15pt);
\draw[line width=0.3mm] (1.5,1.5) ellipse (40pt and 15pt);
\draw[line width=0.3mm] (-0.5,3.5) ellipse (40pt and 15pt);
\draw[line width=0.7mm] (1.5,3.5) ellipse (40pt and 15pt);
\draw[line width=0.3mm] (0.5,4.0) ellipse (90pt and 35pt);
\draw[left] (x)++(-0.1,0) node {$x$};
\draw[right] (y)++(0.1,0) node {$y$};
\draw[left] (w)++(-0.1,0) node {$w$};
\draw[left] (y1)++(-0.1,0) node {$y_1$};
\draw[right] (x1)++(0.1,0) node {$x_1$};
\draw[above] (y2)++(0,0.1) node {$y_2$};
\draw[above] (y3)++(0,0.1) node {$y_3$};
\draw[above] (x2)++(0,0.1) node {$x_2$};
\draw[above] (x3)++(0,0.1) node {$x_3$};
\draw[above] (xr-1)++(0,0.1) node {$x_{r-1}$};
\draw[above] (yr-1)++(0,0.1) node {$y_{r-1}$};
\draw[right] (z)++(0.1,0) node {$z$};
\draw[right] (y3)++(0.7,0) node {$\cdots$};
\draw[right] (xr-1)++(0.7,0) node {$\cdots$};
\draw (-0.3,1.5) node {$N(x)$};
\draw (1.3,1.5) node {$N(y)$};
\draw (-0.4,3.5) node {$N_2(x)$};
\draw (1.4,3.5) node {$N_2(y)$};
\draw (1.5,4.8) node {$V^*(G)$};
\end{tikzpicture}
\end{center}
\caption{Construction from the proof of Lemma~\ref{conn-r-ASC}}
\label{fig:at-most-2r-1}
\end{figure}

Clearly, $w$ is pendant in $H$ with $wy_1\in E(H)$. Moreover, $d_H(w,x_{r-2})=r+1$. We have ${\rm ecc}_H(v)=r$ for any vertex $v\in V_0\cup \{x,y\}$ with $v\neq x_{r-2}$ from the fact that $V_0\cup \{x,y\}$ induces an isometric subgraph $C_{2r}'$ of $H$. Similarly we have ${\rm ecc}_H(u)=r$ for any vertex $u\in V(G)\setminus \{x,y\}$. Therefore $H$ is an $r$-ASC embedding graph of  $G$, finishing the proof of this lemma.
\end{proof}

We are now ready for the main result of this section which further sharpens Theorem~\ref{thm:constuction} by proving that $\theta_r(G) \le 2r-1$ holds if $r\ge 3$ and $G$ has at least two vertices.

\begin{theorem}\label{index-2r}
If $G$ is a graph of order $n$ and $r\geq 3$, then $\theta_r(G)=2r$ if and only if $G\cong K_1$.
\end{theorem}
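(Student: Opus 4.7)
The plan is to prove both directions of the equivalence.

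For the ``if'' direction, assume $G \cong K_1$. Theorem~\ref{thm:constuction} already gives $\theta_r(K_1) \le 2r$, so the real task is the matching lower bound $\theta_r(K_1) \ge 2r$. Since the induced-subgraph constraint is vacuous for a single vertex, this reduces to the structural sub-claim that every $r$-ASC graph with $r \ge 3$ has at least $2r + 1$ vertices. I would prove this sub-claim by fixing an $r$-ASC graph $H$ with non-central vertices $u$ and $v$. Exploiting ${\rm ecc}_H(p_1) = r$ for the neighbor $p_1$ of $u$ on a shortest $u$-$v$ path, one first deduces $d_H(u, v) = r + 1$, so any shortest $u$-$v$ path $P = u\,p_1\cdots p_r\,v$ contributes $r + 2$ vertices. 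For each interior $p_k$ with $2 \le k \le r - 1$, the maximum distance on $P$ from $p_k$ equals $\max(k, r + 1 - k) \le r - 1 < r$, whereas ${\rm ecc}_H(p_k) = r$; hence each such $p_k$ admits an eccentric vertex outside $V(P)$. Combining this with the constraints $d_H(u, q), d_H(v, q) \le r$ for every $q \notin \{u, v\}$ (from the uniqueness of the eccentric vertices of $u$ and $v$) and the requirement that no new arc shortcut $d_H(u, v)$, a careful analysis forces at least $r - 1$ further vertices outside $V(P)$, yielding $|V(H)| \ge (r + 2) + (r - 1) = 2r + 1$.

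For the converse, suppose $G \ncong K_1$, so $n \ge 2$. If $G$ is connected, Lemma~\ref{conn-r-ASC} immediately yields $\theta_r(G) \le 2r - 1 < 2r$. If $G$ is disconnected but has at least one edge $xy$, the construction of Lemma~\ref{conn-r-ASC} still applies after choosing $xy$ inside a single component, since every vertex of another component lies in $V^*(G) \setminus (N_2(x) \cup N_2(y))$ and is absorbed through the corresponding attachment rule. Finally, if $G$ is edgeless with $n \ge 2$, I would embed $G$ into an enlargement of $C_{2r}'$: start from $C_{2r}'$ (an $r$-ASC graph on $2r + 1$ vertices admitting two non-adjacent vertices $a, b$) and insert the remaining $n - 2$ isolated vertices of $G$ one at a time, each attached to two consecutive cycle vertices disjoint from $\{a, b\}$; a routine eccentricity check shows that this operation preserves the $r$-ASC property and that $G$ sits inside the resulting graph as an induced subgraph, giving $\theta_r(G) \le 2r - 1$. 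In every sub-case $\theta_r(G) < 2r$, contradicting $\theta_r(G) = 2r$.

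The main obstacle is the structural sub-claim that every $r$-ASC graph has at least $2r + 1$ vertices. The naive path-based count yields only $r + 2$ vertices, and the delicate step is leveraging the eccentricity-$r$ requirement on the interior path vertices against the uniqueness of the eccentric vertices of $u$ and $v$: one must rule out the possibility that a single external witness simultaneously certifies the eccentricity of every interior $p_k$ without creating a shortcut that collapses $d_H(u, v)$.
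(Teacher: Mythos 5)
Your overall decomposition agrees with the paper's: the forward direction reduces to the fact that every $r$-ASC graph has order at least $2r+1$ (combined with the upper bound of Theorem~\ref{thm:constuction}), and the converse reduces to showing $\theta_r(G)\le 2r-1$ whenever $n\ge 2$, with Lemma~\ref{conn-r-ASC} handling the connected case. The first genuine gap is in your treatment of the lower bound $|V(H)|\ge 2r+1$. The paper simply quotes this from~\cite{KNW2011}; you attempt to reprove it, and the sketch does not close. The mechanism you describe --- each interior vertex $p_k$ with $2\le k\le r-1$ of a shortest $(u,v)$-path needs an eccentric vertex off the path --- involves only $r-2$ vertices $p_k$, so even if all of their external witnesses were shown to be pairwise distinct (which you do not establish), you would obtain only $(r+2)+(r-2)=2r$ vertices, one short of $2r+1$. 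The extremal example $C_{2r}'$ illustrates this exactly: the $r-2$ witnesses are distinct, yet there is one further external vertex whose existence your argument never accounts for. The sentence ``a careful analysis forces at least $r-1$ further vertices'' is therefore not a proof outline but precisely the missing proof.

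The second gap is in the edgeless case. If the $n-2$ additional isolated vertices are attached to different pairs of consecutive cycle vertices, two of them can sit on antipodal pairs, and their distance in the resulting graph is $1+(r-1)+1=r+1$; the graph then has more than two vertices of eccentricity $r+1$ and is not $r$-ASC. Even attaching all of them to one common pair requires choosing that pair so that it is not antipodal to the attachment point of the pendant vertex of $C_{2r}'$, since otherwise each new vertex is at distance $r+1$ from the pendant vertex. The paper avoids these issues in its Case 1 by placing two isolated vertices of $G$ antipodally on the cycle itself and joining every remaining vertex of $G$ to the two cycle-neighbours $x_1,y_1$ of one of them. Finally, your Case 2 (disconnected with at least one edge) extends the paper's Case 3 by absorbing isolated vertices through the $V^*(G)$ attachment rule; this is plausible but is an extension of Lemma~\ref{conn-r-ASC} (whose Case 3 the paper explicitly restricts to graphs with no isolated vertices) and would need its own eccentricity verification.
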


\begin{proof} Note that $C_{2r}'$ is an $r$-ASC graph.
If $G\cong K_1$, considering the fact that any $r$-ASC has order $n\geq 2r+1$ \cite{KNW2011}, the result $\theta_r(G)=2r$ holds immediately. Conversely, if $G$ is a graph of order $n\geq 2$, in view of Theorem \ref{thm:constuction}, we only need to prove that $\theta_r(G)\leq 2r-1$. Equivalently, it suffices to construct a graph $H$ of order at most $n+2r-1$ as an $r$-ASC embedding graph of $G$. Denote by $h$ the number of isolated vertices in $G$ and set $V_0=\{x_1,\ldots,x_{r-1},y_1,\ldots,y_{r-1},w\}$. Based on the value of $h$, we distinguish the following three cases.

\medskip\noindent
{\bf Case 1.} $h\geq 2$.\\
In this case we may assume that $x_0,y_0$ are two isolated vertices in $G$. In $H$, the adjacency relation of $V_0\cup \{x_0,y_0\}$ is $x_0-x_1-\cdots-x_{r-1}-y_0-y_{r-1}-\cdots-y_1-x_0$ with $x_0w$ being a pendant edge, that is, $V_0\cup \{x_0,y_0\}$ induces a graph $C_{2r}'$ in $H$. Moreover, any other vertex $v$ than $x_0,y_0$ is joined with $x_1$ and $y_1$ in $H$. The construction is shown in Fig.~\ref{fig:Case1-for-iff-K1}, where the thick edges from $x_1$ and $y_1$ indicate that $x_1$ and $x_2$ are adjacent to all the vertices in $V(G)\setminus \{x_0,y_0\}$.

\begin{figure}[ht!]
\begin{center}
\begin{tikzpicture}[scale=1.0,style=thick]
\def\vr{3pt}
\path (0,0) coordinate (x0);
\path (5,0) coordinate (y0);
\path (-1,1) coordinate (w);
\path (1,2) coordinate (x1);
\path (2,2) coordinate (x2);
\path (4,2) coordinate (xr-1);
\path (6,2) coordinate (y1);
\path (7,2) coordinate (y2);
\path (9,2) coordinate (yr-1);
\draw (x0) -- (w);
\draw (y0) -- (xr-1);
\draw (x0) -- (y1) --(y2);
\draw (y0) -- (yr-1);
\draw (y0) -- (xr-1);
\draw (x0) --  (x1) -- (x2);
\draw[line width=1mm] (x1) -- (1.5,-0.4);
\draw[line width=1mm] (y1) -- (6.5,-0.4);
\draw (x2) -- (2.5,2);
\draw (xr-1) -- (3.5,2);
\draw (y2) -- (7.5,2);
\draw (yr-1) -- (8.5,2);
\draw (x0)  [fill=white] circle (\vr);
\draw (y0)  [fill=white] circle (\vr);
\draw (w)  [fill=white] circle (\vr);
\draw (x1)  [fill=white] circle (\vr);
\draw (y1)  [fill=white] circle (\vr);
\draw (x2)  [fill=white] circle (\vr);
\draw (y2)  [fill=white] circle (\vr);
\draw (xr-1)  [fill=white] circle (\vr);
\draw (yr-1)  [fill=white] circle (\vr);
\draw (3,0) ellipse (150pt and 30pt);
\draw[below] (x0)++(0,-0.1) node {$x_0$};
\draw[below] (y0)++(0,-0.1) node {$y_0$};
\draw[above] (w)++(0,0.1) node {$w$};
\draw[above] (x1)++(0,0.1) node {$x_1$};
\draw[above] (y1)++(0,0.1) node {$y_1$};
\draw[above] (x2)++(0,0.1) node {$x_2$};
\draw[above] (y2)++(0,0.1) node {$y_2$};
\draw[above] (xr-1)++(0,0.1) node {$x_{r-1}$};
\draw[above] (yr-1)++(0,0.1) node {$y_{r-1}$};
\draw (3,-0.5) node {$G$};
\draw (3,2) node {$\cdots$};
\draw (8,2) node {$\cdots$};
\draw (-2,2) node {$H:$};
\end{tikzpicture}
\end{center}
\caption{Case 1 from the proof of Theorem~\ref{index-2r}}
\label{fig:Case1-for-iff-K1}
\end{figure}

It can be easily checked that $d_H(w,y_0)=r+1$ and any other vertex in $H$ has eccentricity $r$. This ensures that $H$ is an $r$-ASC embedding graph of $G$.

\medskip\noindent
{\bf Case 2.} $h=1$.\\
In this case let $v$ be an isolated vertex in $G$.  Let the adjacency relation of $V_0\setminus\{w\}$ and $v$ in $H$ be $x_1-\cdots-x_{r-1}-v-y_{r-1}-\cdots-y_1$. Any  vertex of $G$ different from $v$ is joined with $x_1$ and $y_1$ in $H$. Choosing an arbitrary but fixed vertex $u\in V(G)\setminus\{v\}$, we join $w$ with $u$. This defines $H$.  See Fig.~\ref{fig:Case2-for-iff-K1} where this construction is shown; again thick edges from $x_1$ and $y_1$ indicate that $x_1$ and $y_1$ are adjacent to all the vertices in $V(G)\setminus \{v\}$.

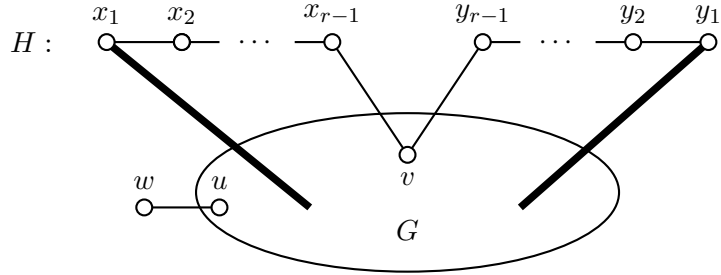
\begin{figure}[ht!]
\begin{center}
\begin{tikzpicture}[scale=1.0,style=thick]
\def\vr{3pt}
\path (0,0.5) coordinate (v);
\path (-1,2) coordinate (xr-1);
\path (-3,2) coordinate (x2);
\path (-4,2) coordinate (x1);
\path (1,2) coordinate (yr-1);
\path (3,2) coordinate (y2);
\path (4,2) coordinate (y1);
\path (-2.5,-0.2) coordinate (u);
\path (-3.5,-0.2) coordinate (w);
\draw (xr-1) -- (v) -- (yr-1);
\draw (y1) -- (y2);
\draw (x1) -- (x2);
\draw (xr-1) -- (-1.5,2);
\draw (yr-1) -- (1.5,2);
\draw (x2) -- (-2.5,2);
\draw (y2) -- (2.5,2);
\draw (u) -- (w);
\draw[line width=1mm] (x1) -- (-1.3,-0.2);
\draw[line width=1mm] (y1) -- (+1.5,-0.2);
\draw (x1)  [fill=white] circle (\vr);
\draw (x2)  [fill=white] circle (\vr);
\draw (w)  [fill=white] circle (\vr);
\draw (y1)  [fill=white] circle (\vr);
\draw (y2)  [fill=white] circle (\vr);
\draw (x2)  [fill=white] circle (\vr);
\draw (u)  [fill=white] circle (\vr);
\draw (v)  [fill=white] circle (\vr);
\draw (xr-1)  [fill=white] circle (\vr);
\draw (yr-1)  [fill=white] circle (\vr);
\draw (0,0) ellipse (80pt and 30pt);
\draw[below] (v)++(0,-0.1) node {$v$};
\draw[above] (w)++(0,0.1) node {$w$};
\draw[above] (u)++(0,0.1) node {$u$};
\draw[above] (x1)++(0,0.1) node {$x_1$};
\draw[above] (y1)++(0,0.1) node {$y_1$};
\draw[above] (x2)++(0,0.1) node {$x_2$};
\draw[above] (y2)++(0,0.1) node {$y_2$};
\draw[above] (xr-1)++(0,0.1) node {$x_{r-1}$};
\draw[above] (yr-1)++(0,0.1) node {$y_{r-1}$};
\draw (0,-0.5) node {$G$};
\draw (2,2) node {$\cdots$};
\draw (-2,2) node {$\cdots$};
\draw (-5,2) node {$H:$};
\end{tikzpicture}
\end{center}
\caption{Case 2 from the proof of Theorem~\ref{index-2r}}
\label{fig:Case2-for-iff-K1}
\end{figure}

Note that $d_H(w,v)=r+1$ with $w$ being pendant and any other vertex than $w,v$ has eccentricity $r$ in $H$. Thus $H$ is an $r$-ASC embedding graph of $G$ as desired.

\medskip\noindent
{\bf Case 3.} $h=0$. \\
 In this case, if $G$ is a connected graph, then we are done by Lemma \ref{conn-r-ASC}. If not, by choosing any component of $G$,  we run the same operation on each component of the graph $G$ as that in the proof of Lemma \ref{conn-r-ASC} and join any vertex $v$ from other component(s) with $y_1,y_3$ for $r\geq 4$ or with $y_1,x_2$ for $r=3$. In this way we obtain an $r$-ASC embedding graph of $G$, completing the proof of the theorem.
\end{proof}

\section{$3$-ASC index of graphs with diameter at most $2$}
\label{sec:small-diam}

In this section we first determine the $3$-ASC index of graphs of diameter $1$.  Clearly, a graph of order $n>1$ has diameter $1$ if and only if $G\cong K_n$. Next we prove the $3$-ASC index of graphs with diameter $1$. For consistency we include $K_1$ in the following theorem.

\begin{theorem}\label{thm-complete}
$\theta_3(K_1)=6$ and if $n\ge 2$, then $\theta_3(K_n)=5$.
\end{theorem}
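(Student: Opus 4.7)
The theorem has two parts. For $K_1$, Theorem~\ref{thm:constuction} with $r=3$ yields $\theta_3(K_1)\le 6$, and since every $3$-ASC graph has at least $2\cdot 3+1=7$ vertices~\cite{KNW2011}, embedding $K_1$ requires at least $6$ added vertices, giving $\theta_3(K_1)=6$. For $n\ge 2$, the upper bound $\theta_3(K_n)\le 5$ is immediate from Theorem~\ref{index-2r}, so the real content is the lower bound $\theta_3(K_n)\ge 5$. I would prove it by contradiction: suppose a $3$-ASC graph $H$ contains $K_n$ induced with $|V(H)|\le n+4$, and set $S=V(H)\setminus V(K_n)$, so $|S|\le 4$. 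For $n=2$ this is immediate from $|V(H)|\le 6<7$, so assume $n\ge 3$.

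I would first record two structural facts about any $3$-ASC graph $H$: (a) $\mathrm{diam}(H)=4$ and the two non-central vertices $p,q$ satisfy $d(p,q)=4$, because any vertex at distance $\ge 4$ from $p$ would have eccentricity $\ge 4$ and hence coincide with $q$, forcing $\mathrm{ecc}(p)=4$; (b) every central vertex lies at distance at most $3$ from both $p$ and $q$. Since $\mathrm{diam}(K_n)=1$, at most one of $p,q$ lies in $V(K_n)$, so up to symmetry either $q\in V(K_n)$ with $p\in S$ (Case A) or $p,q\in S$ (Case B).

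In \textbf{Case A}, the inequality $4=d(p,q)\le d(p,v)+1$ combined with $\mathrm{ecc}(v)=3$ forces $d(p,v)=3$ for every $v\in V(K_n)\setminus\{q\}$, so $V(K_n)\setminus\{q\}$ occupies the third BFS layer $L_3$ from $p$. Letting $A=N(p)$ and $C$ denote the first two BFS layers (both disjoint from $V(K_n)$), the identity $|A|+|C|+|L_3\cap S|=|S|-1\le 3$ together with $|A|,|C|\ge 1$ leaves only the configurations $(|A|,|C|,|L_3\cap S|)\in\{(1,1,0),(1,1,1),(1,2,0),(2,1,0)\}$. In the first three configurations the small $|C|$ forces every $c\in C$ to be adjacent to all of $A$ and to every $v\in L_3$, so its only non-neighbors $\{p,q\}$ lie at distance $2$, yielding $\mathrm{ecc}(c)=2$ and contradicting the $3$-ASC property. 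In $(2,1,0)$ either $c$ is adjacent to both $A$-vertices (producing the same collapse) or misses one of them, in which case that $A$-vertex becomes so poorly connected that its eccentricity is $\ge 4$.

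In \textbf{Case B}, $|S|=2$ is immediately excluded because $K_n$ shortcuts $d(p,q)$ to at most $3$. For $|S|\in\{3,4\}$ I would enumerate the placements of the three intermediate vertices $u_1,u_2,u_3$ of a shortest length-$4$ path from $p$ to $q$: at most two of them can lie in $V(K_n)$ (two non-consecutive $K_n$ intermediates would create a clique shortcut) and at most $|S|-2\le 2$ can lie in $S$, leaving only a handful of placements. In each placement a $K_n$-vertex lying on or immediately adjacent to the path has its non-neighbors all at distance $\le 2$, forcing $\mathrm{ecc}=2$. The main obstacle will be the complete exhaustive case analysis in both Cases A and B; the unifying observation is that when $|S|\le 4$, the length-$4$ shortest $p$-$q$ path cannot distribute its three intermediate vertices sparsely enough around the densely-connected clique $V(K_n)$ to leave a distance-$3$ witness for every central vertex.
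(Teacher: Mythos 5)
Your treatment of $K_1$ and of the upper bound $\theta_3(K_n)\le 5$ via Theorem~\ref{index-2r} is correct (and there is no circularity, since Theorem~\ref{index-2r} is proved independently of this statement). Your lower-bound strategy --- classifying by how many of the two eccentricity-$4$ vertices $p,q$ lie in $V(K_n)$ and then doing a BFS-layer count from $p$ --- is genuinely different from the paper's proof, which instead first pins down the extremal diameter-$4$ structures $G'$, $G''$ obtainable by adding at most three vertices, and then shows that with four added vertices the induced subgraph on them must be a $P_4$ whose endpoints force either an eccentricity-$2$ vertex or an eccentricity-$5$ vertex. Your skeleton (Case A configurations $(1,1,0),(1,1,1),(1,2,0),(2,1,0)$; Case B enumeration of placements of $u_1,u_2,u_3$) is sound and, with correct case-killing arguments, does yield the theorem.

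However, two of the specific claims you use to dispose of cases are false as stated, and they fail exactly on the placements that carry the real difficulty. First, in Case A, configuration $(1,2,0)$: with $|C|=2$ a vertex of $L_3$ need only be adjacent to \emph{one} of the two $L_2$-vertices, so it is not true that every $c\in C$ is adjacent to every $v\in L_3$. The correct (and simpler) argument is that at least one $c\in C$ has a neighbor in the clique layer $K_n\setminus\{q\}$, and already that single adjacency puts all of $V(K_n)$, hence also $q$, within distance $2$ of $c$, while $p$, $A$ and the other $C$-vertex are within distance $2$ via $A$; this uniform argument also covers $(1,1,0)$ and $(1,1,1)$. Second, and more seriously, in Case B your blanket claim that ``a $K_n$-vertex lying on or immediately adjacent to the path has its non-neighbors all at distance $\le 2$'' is wrong in at least two placements. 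In the placement $p-u_1-s_1-s_2-q$ (one clique vertex, at the $p$-end) the clique vertex $u_1$ has eccentricity $3$, not $2$; the eccentricity-$2$ witness is instead $s_1$, which is adjacent to a clique vertex and at distance $2$ from both $p$ and $q$. In the placement $p-u_1-u_2-s_1-q$ with $|S|=4$, the vertex $u_2$ has eccentricity $2$ only if the fourth added vertex $s_2$ lies within distance $2$ of it; when $s_2$ avoids $N[u_2]\supseteq V(K_n)\cup\{s_1\}$ its only possible neighbors are $p$ or $q$, and then $d(s_2,q)\ge 5$ or $d(s_2,p)\ge 5$, so the contradiction is of a different kind (a vertex of eccentricity $\ge 5$, which is the analogue of the paper's ``${\rm ecc}_G(x)=5$'' branch). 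Each case can be repaired, but as written the uniform mechanism you invoke does not close the proof; you need to supply the distinct witnesses case by case.
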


\begin{proof}
By inspection on small graphs we infer that the order of a smallest $3$-ASC graph is $7$. Consequently, $\theta_3(K_1)=6$. In the rest of the proof we assume that $n\ge 2$.

We first show that $\theta_3(K_n)\geq 5$. Assume that $G$ is a graph obtained from $K_n$ by adding at most three vertices. Then ${\rm diam}(G)\leq 4$. Moreover, the equality holds if and only if either $G\cong G^{\prime}$ is formed by joining a pendant vertex of a path $P_3$ to some but not all vertices of $K_n$, or $G\cong G^{\prime\prime}$ is obtained by joining a vertex of a path $P_2$ to some but not all vertices of $K_n$ and joining another new vertex to some other different vertices with degree $n-1$ in it. In both cases there is at least one vertex $x$ in $G$ with ${\rm ecc}_G(x)=2$. Thus $G$ is not a $3$-ASC graph which in turn implies that $\theta_3(K_n)\geq 4$.

Suppose next that $\theta_3(K_n)=4$ and let $G$ be a $3$-ASC graph obtained by adding vertices $V_0=\{x,y,z,w\}$ to $K_n$. If $G[V_0]\ncong P_4$, then $G$ contains $G^{\prime}$ or $G^{\prime\prime}$ as an induced graph, where $G^{\prime}$ and $G^{\prime\prime}$ are the graphs defined in the above paragraph. Otherwise, ${\rm diam}(G)<4$, a clear contradiction. Since ${\rm diam}(G) = 4$, there exists at least one vertex $v$ in $G$ with ${\rm ecc}_G(v)=2$, but this contradicts the fact that $G$ is a $3$-ASC graph.

 Hence $G[V_0]\cong P_4$ must hold. Let the adjacency relation of $V_0$ in $G$ be $w-z-y-x$ and assume without loss of generality that $w$ has a neighbor in $K_n$. Then $x$ is a pendant vertex of $G$ because otherwise any vertex of $G$ would have eccentricity at most $3$. If at least one  of the vertices $y$ and $z$, say $y$, is adjacent to one or more vertices of $K_n$, then ${\rm ecc}_G(y)=2$, which is a contradiction to the fact that $G$ is a $3$-ASC graph. Therefore, ${\rm deg}_G(z) = {\rm deg}_G(y) = 2$. Now, if $w$ is adjacent to all the vertices of $K_n$, then ${\rm ecc}_G(z)=2$, otherwise ${\rm ecc}_G(x)=5$. Hence in  both cases $G$ is not a $3$-ASC graph. We conclude that $\theta_3(K_n)\geq 5$.

\begin{figure}[ht!]
\begin{center}
\begin{tikzpicture}[scale=0.8,style=thick]
\def\vr{3pt}
\path (0,0) coordinate (a);
\path (2,0) coordinate (b);
\path (4,0) coordinate (c);
\path (1,2) coordinate (d);
\path (3,2) coordinate (e);
\path (5,2) coordinate (f);
\path (2,4) coordinate (g);
\path (4,4) coordinate (h);
\draw (a) -- (b) -- (c) -- (f) -- (h) -- (g) --(d) -- (b);
\draw (b) -- (d) -- (g);
\draw (c) -- (e) -- (h);
\draw (a)  [fill=white] circle (\vr);
\draw (b)  [fill=black] circle (\vr);
\draw (c)  [fill=black] circle (\vr);
\draw (d)  [fill=black] circle (\vr);
\draw (e)  [fill=black] circle (\vr);
\draw (f)  [fill=black] circle (\vr);
\draw (g)  [fill=black] circle (\vr);
\draw (h)  [fill=white] circle (\vr);
\draw (4.5,2) ellipse (55pt and 15pt);
\draw (4,0.7) node {$\cdots$};
\draw (4,2) node {$\cdots$};
\draw (4,3.3) node {$\cdots$};
\draw [right] (f) node {$K_{n-1}$};
\draw [right] (h) node {$u$};
\draw [right] (c) node {$v$};
\end{tikzpicture}
\end{center}
\caption{Graph $G_n$, $n\geq 1$}
\label{fig:Q1}
\end{figure}
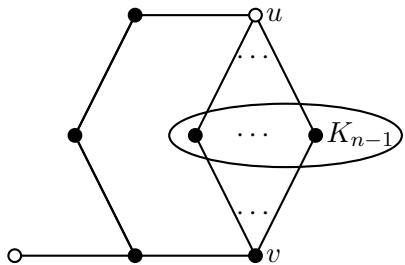

It remains to prove that $\theta_3(K_n)\le 5$ for $n\ge 2$. For this sake consider the graph $G_n$ which is schematically shown in Fig.~\ref{fig:Q1}. In the graph $G_n$, the vertices $u$ and $v$ are adjacent to all the vertices of the $K_{n-1}$ leading to two subgraphs isomorphic to $K_n$.  The two white vertices of $G_n$ have eccentricity $4$ while all the black vertices have eccentricity $3$  (hereafter the white vertices denote the diametrical vertices and black vertices denote central ones). Hence $G_n$ is a $3$-ASC graph that contains $K_n$.
\end{proof}

 The graph $G_2$ from the proof of Theorem~\ref{thm-complete} is of order seven which makes it a smallest $3$-ASC graph. Now we turn to  graphs of diameter $2$ and first bound their $3$-ASC index as follows.

\begin{lemma}\label{lem:3-or-4}
 If $G$ is a graph of diameter $2$, then $3\leq\theta_3(G)\leq 4$.
\end{lemma}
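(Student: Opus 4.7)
The plan is to prove the two inequalities separately.

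For the lower bound $\theta_3(G)\ge 3$, I rule out each of $\theta_3(G)\in\{0,1,2\}$. The cases $0$ and $1$ are immediate: $G$ has diameter $2$, and adding a single vertex can increase the diameter by at most $1$, giving a graph of diameter at most $3<4$; but a $3$-ASC graph has diameter $4$. The decisive case is $\theta_3(G)=2$. If $H$ were $3$-ASC obtained from $G$ by adding two vertices $u,v$, then every pair inside $V(G)$ has $d_H\le 2$ and every pair $(x,u)$ or $(x,v)$ with $x\in V(G)$ has $d_H\le 3$, so the only pair that can realize diameter $4$ is $\{u,v\}$. Hence $d_H(u,v)=4$, forcing $uv\notin E(H)$ and $N_H(u)\cap N_H(v)=\emptyset$. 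By connectedness pick $a\in N_H(u)\cap V(G)$ and $b\in N_H(v)\cap V(G)$. Then $a\ne b$ and $ab\notin E(G)$ (otherwise the path $u a b v$ would give $d_H(u,v)\le 3$). Since $\mathrm{diam}(G)=2$, the non-adjacent vertices $a,b$ share a common neighbor $c\in V(G)$. Consequently $d_H(c,u)\le 2$, $d_H(c,v)\le 2$, and $d_H(c,y)\le 2$ for every $y\in V(G)$, so $\mathrm{ecc}_H(c)\le 2$, contradicting $\mathrm{rad}(H)=3$.

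For the upper bound $\theta_3(G)\le 4$, I construct a $3$-ASC graph $H$ that contains $G$ as an induced subgraph with $|V(H)\setminus V(G)|=4$. Four new vertices $a_1,a_2,b_1,b_2$ are added, with $a_2$ and $b_2$ designated as the diametrical pair at distance $4$ and $a_1,b_1$ together with all of $V(G)$ central. Exploiting $\mathrm{diam}(G)=2$, the adjacencies to $V(G)$ are designed so that simultaneously (i) $d_H(a_2,b_2)=4$; (ii) each $x\in V(G)$ is at distance exactly $3$ from at least one of $a_2,b_2$; and (iii) no $x\in V(G)$ is at distance at most $2$ from both $a_2$ and $b_2$. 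Once the adjacencies are chosen, verifying the eccentricities is a routine distance computation.

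I expect the upper-bound construction to be the main obstacle. Naive attempts such as attaching two pendant paths of length $2$ at a single edge of $G$, or making $a_1,b_1$ adjacent to all of $V(G)$, collapse: either the diameter drops below $4$, or some $G$-vertex ends up with eccentricity $2$ (violating $\mathrm{rad}=3$) or $4$ (creating a third diametrical vertex). The resolution is to pick a pair of non-adjacent vertices of $G$ at distance $2$ together with one of their common neighbors, and then attach $a_1,a_2,b_1,b_2$ in an asymmetric way that achieves (i)--(iii); verification of the eccentricities then follows by a standard distance case analysis.
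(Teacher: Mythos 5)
Your overall strategy (separate lower and upper bounds, with the $\theta_3=2$ case of the lower bound settled via a common neighbor) matches the paper's, but both halves have problems, and the upper bound is not actually proved.

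For the lower bound, the cases $\theta_3(G)\in\{0,1\}$ are fine, but in the case $\theta_3(G)=2$ your assertion that every pair $(x,u)$ with $x\in V(G)$ satisfies $d_H(x,u)\le 3$ --- and hence that only $\{u,v\}$ can realize diameter $4$ --- is unjustified and false in general: it requires $u$ to have a neighbor in $V(G)$. If $u$ is pendant with $N_H(u)=\{v\}$, then $d_H(u,x)=1+d_H(v,x)$ can equal $4$ for some $x\in V(G)$ while $d_H(u,v)=1$, so the deduction $d_H(u,v)=4$ collapses. This configuration must be treated separately; it is easy (any $b\in N_H(v)\cap V(G)$ then has $d_H(b,u)=2$, $d_H(b,v)=1$ and $d_H(b,y)\le 2$ for all $y\in V(G)$, so ${\rm ecc}_H(b)=2$, contradicting ${\rm rad}(H)=3$), and the paper's proof handles exactly this pendant alternative as one of its two cases.

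The more serious issue is the upper bound. You never specify the adjacencies of $a_1,a_2,b_1,b_2$: you list desiderata (i)--(iii), observe that the naive candidates fail, and then assert that some asymmetric attachment achieving (i)--(iii) exists. That existence claim is the entire content of $\theta_3(G)\le 4$ and cannot be left unproved; moreover (i)--(iii) are not even sufficient, since you also need that no vertex of $G$ is at distance $4$ or more from $a_2$ or $b_2$ (otherwise $H$ acquires a third diametrical vertex) and that $a_1$ and $b_1$ themselves have eccentricity exactly $3$. For comparison, the paper gives an explicit $H$: the four new vertices form an induced path $w-z-y-x$; one picks $v\in V(G)$ with ${\rm ecc}_G(v)=2$ and $u\in {\rm Ecc}_G(v)$, joins $x$ to $u$ only, and joins $w$ to every vertex of $V(G)\setminus\{u,v\}$. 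The unique diametrical pair is then $\{v,y\}$ --- one old and one new vertex, not two new ones as in your plan --- and the eccentricity check is routine. Until you exhibit a concrete edge set and verify the eccentricities, the upper bound remains open in your write-up.
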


\begin{proof}
To prove the upper bound we need to construct a graph $H$ of order $|V(G)|+4$ as a $3$-ASC embedding graph of $G$. Set $V(H)=V(G)\cup\{w,x,y,z\}$ and let the adjacency relation between the new vertices be $w-z-y-x$. Let $v$ be a vertex in $G$ with ${\rm ecc}_G(v)=2$ and $u$ an eccentric vertex of $v$ in $G$, that is, $d_G(v,u)=2$. Join the vertex $u$ in $G$ with $x$ and join all vertices from $V(G)\setminus \{u,v\}$ with $w$.  This defines $H$ (see Fig.~\ref{fig:QNEW1} for an example of $H$ when $G=K_{2,3}$).

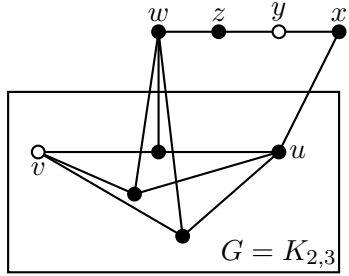
\begin{figure}[ht!]
\begin{center}
\begin{tikzpicture}[scale=0.8,style=thick]
\def\vr{3pt}
\path (0,0.5) coordinate (a);
\path (2,0.5) coordinate (b);
\path (4,0.5) coordinate (c);
\path (1.6,-0.2) coordinate (d);
\path (2,2.5) coordinate (e);
\path (3,2.5) coordinate (f);
\path (4,2.5) coordinate (g);
\path (5,2.5) coordinate (h);
\path (2.4,-0.9) coordinate (i);
\draw (d) -- (e);
\draw (c) -- (i) -- (a);
\draw (i) -- (e) -- (b);
\draw (a) -- (d) -- (c);
\draw (a) -- (b) -- (c);
\draw (e) -- (f) -- (g) -- (h);
\draw (c) -- (h);
\draw (-0.5,-1.5) rectangle (5,1.5);
\draw (4,-1.2) node {$G=K_{2,3}$};
\draw (a)  [fill=white] circle (\vr);
\draw (b)  [fill=black] circle (\vr);
\draw (c)  [fill=black] circle (\vr);
\draw (d)  [fill=black] circle (\vr);
\draw (e)  [fill=black] circle (\vr);
\draw (f)  [fill=black] circle (\vr);
\draw (g)  [fill=white] circle (\vr);
\draw (h)  [fill=black] circle (\vr);
\draw (i)  [fill=black] circle (\vr);
\draw [below] (a) node {$v$};
\draw [right] (c) node {$u$};
\draw [above] (h) node {$x$};
\draw [above] (g) node {$y$};
\draw [above] (f) node {$z$};
\draw [above] (e) node {$w$};
\end{tikzpicture}
\end{center}
\caption{ Graph $H$ for $G=K_{2,3}$}
\label{fig:QNEW1}
\end{figure}

Note that ${\rm deg}_H(y)={\rm deg}_H(z)={\rm deg}_H(x)=2$ and that the neighborhood of $v$ remains unchanged in $H$.  Then we arrive at $d_H(v,y)=4$ from the structure of $H$. Moreover, any vertex $v^{\prime}$ from $N_G(u)$ has eccentricity $3$ in $H$ with ${\rm Ecc}_H(v^{\prime})=\{y\}$. Similarly, ${\rm Ecc}_H(v^{\prime})=\{x,y\}$ for any vertex $v^{\prime}\in {\rm Ecc}_G(u)\setminus\{v\}$, since both $x$ and $y$ have a largest distance $3$ to $v^{\prime}$ in $H$. It follows that ${\rm ecc}_H(v^{\prime})=3$ for any vertex $v^{\prime}\in {\rm Ecc}_G(u)\setminus\{v\}$. In addition, we have $d_H(w,x)=3=d_H(z,u)$, which implies that ${\rm ecc}_H(u)={\rm ecc}_H(z)=3={\rm ecc}_H(w)={\rm ecc}_H(x)$. Thus $H$ is a $3$-ASC embedding graph of $G$ as desired.

Next we deal with the lower bound on $\theta_3(G)$. Note that any graph obtained by adding one more vertex to $G$ has diameter at most $3$. Thus we have $\theta_3(G)\geq 2$.

Assume that $G^{\prime}$ is an embedding
$3$-ASC graph of  $G$ by adding two more vertices $x$ and $y$. Then $G^{\prime}$ has diameter $4$. Hence the adjacency relation of
$x,y$ in $G^{\prime}$ is either $y-x-z$ with $y$ being pendant, ${\rm deg}_{G^{\prime}}(x)\geq 2$ and $z\in V(G)$ with ${\rm ecc}_G(z)=2$, or $x-u$ and $y-v$ where $d_G(u,v)=2$ with ${\rm ecc}_G(u)={\rm ecc}_G(v)=2$. Considering that $G$ has diameter $2$, in the latter case, there is at least one vertex $w$ as a common neighbor of vertices $u$ and $v$ in $G$. Then we deduce that ${\rm ecc}_{G^{\prime}}(z)=2$ in the former case or ${\rm ecc}_{G^{\prime}}(w)=2$ in the latter case, either of which contradicts the fact that $G^{\prime}$ is a $3$-ASC graph. So $\theta_3(G)\geq 3$, finishing the proof of this lemma.
\end{proof}

It is interesting to note that if $G=P_3$, then the graph $H$ constructed in the proof of Lemma~\ref{lem:3-or-4} is the graph of order $7$ mentioned before the lemma. Since the order of a smallest $3$-ASC graph is $7$, it follows that $\theta_3(P_3) = 4$. There is a unique $2$-SC graph $C_4$ of order $4$ with diameter $2$. By Lemma~\ref{lem:3-or-4}, $\theta_3(C_4)\leq 4$ and one can prove that actually $\theta_3(C_4) = 4$ holds. Consider next $C_5$. By Lemma~\ref{lem:3-or-4}, $\theta_3(C_5)\leq 4$. But, to our surprise, $4$ is not the minimum number of vertices needed to be added for obtaining a $3$-ASC embedding of $C_5$. Denote by $C_n'$ the graph which consists of the cycle $C_n$ and an additional vertex adjacent to exactly two  consecutive vertices in $C_n$.  As observed in~\cite{KNW2011}, the graph $C_{2r+1}'$ is an $r$-ASC graph. Let $H_5'$  (shown in Fig.~\ref{fig:QNEW2}) be the graph obtained from $C_5'$ by respectively attaching a pendant vertex to the two non-adjacent vertices of degree $2$ in $C_5'$. Since $H_5'$ is a $3$-ASC graph, we conclude that $\theta_3(C_5)=3$. This example can be generalized as follows.

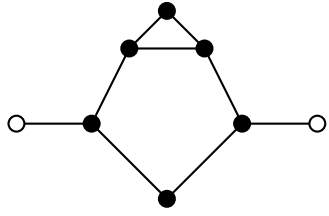
\begin{figure}
\begin{center}
\begin{tikzpicture}[scale=1,style=thick]
\def\vr{3pt}

\draw (1.5,2) -- (2,1) -- (1,0) -- (0,1) -- (0.5,2);
\draw (1,2.5) -- (0.5,2) -- (1.5,2) -- (1,2.5);
\draw (2,1) -- (3,1);
\draw (-1,1) -- (0,1);

\draw (0,1)  [fill=black] circle (\vr);
\draw (-1,1)  [fill=white] circle (\vr);
\draw (2,1)  [fill=black] circle (\vr);
\draw (3,1)  [fill=white] circle (\vr);
\draw (1,0)  [fill=black] circle (\vr);
\draw (0.5,2)  [fill=black] circle (\vr);
\draw (1.5,2)  [fill=black] circle (\vr);
\draw (1,2.5)  [fill=black] circle (\vr);
\end{tikzpicture}
\end{center}
\caption{ Graph $H_5'$} \label{fig:QNEW2}
\end{figure}
\begin{theorem}\label{thm:new-added}
Let $G$ be a $2$-SC graph. If $G$ contains a diametrical pair $u,v$ and vertices $u',v'$ such that $u'\in N_G(u)\setminus N_G(v)$, $v'\in N_G(v)\setminus N_G(u)$ and $N_G(u)\cap N_G(v)\subseteq {\rm Ecc}_G(u')\cap {\rm Ecc}_G(v')$, then $\theta_3(G)=3$.
\end{theorem}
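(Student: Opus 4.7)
The lower bound $\theta_3(G)\geq 3$ is immediate from Lemma~\ref{lem:3-or-4}, so the task reduces to exhibiting a $3$-ASC graph $H$ on $|V(G)|+3$ vertices that contains $G$ as an induced subgraph. Motivated by the construction of $H_5'$ from $C_5$ shown in Fig.~\ref{fig:QNEW2}, the plan is to form $H$ from $G$ by introducing three new vertices $x,y,z$: attach $x$ as a pendant at $u$, attach $y$ as a pendant at $v$, and make $z$ adjacent to both $u'$ and $v'$ (and to nothing else). The intention is that $x$ and $y$ will be the two non-central vertices with $d_H(x,y)=4$, while $z$ plays the role of a surrogate ``common neighbor'' of $u$ and $v$ that nonetheless sits at distance $3$ from every genuine common neighbor of $u$ and $v$.

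First, since $x$ and $y$ are pendants of $u$ and $v$ respectively, any $(x,y)$-path runs through $u$ and $v$, giving $d_H(x,y)=2+d_H(u,v)$. Because $u\not\sim v$ in $G$ and the only new vertices that could shorten $d_H(u,v)$ are the pendants and $z$ (with $u$-$u'$-$z$-$v'$-$v$ being a length-$4$ detour), we have $d_H(u,v)=d_G(u,v)=2$, so $d_H(x,y)=4$.

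Next, one verifies ${\rm ecc}_H(a)=3$ for every $a\in V(G)\cup\{z\}$. The upper bound ${\rm ecc}_H(a)\leq 3$ is routine: any original vertex is reached in at most $2$ steps along $G$, while $x$, $y$, $z$ are reached in at most $d_H(a,u)+1$, $d_H(a,v)+1$, and $\min\{d_H(a,u'),d_H(a,v')\}+1$ steps respectively, each of which is at most $3$; for $a=z$ one has $d_H(z,u)=d_H(z,v)=2$ and a similar bookkeeping. For the lower bound ${\rm ecc}_H(a)\geq 3$, the argument splits by the position of $a$ relative to $u$ and $v$: if $a\in\{u,v,u',v',z\}$, or if $a$ lies in the symmetric difference $(N(u)\setminus N(v))\cup(N(v)\setminus N(u))$, or if $a$ is at distance $2$ in $G$ from both $u$ and $v$, then one of $d_H(a,x)$, $d_H(a,y)$, $d_H(a,z)$ equals $3$ by direct inspection (for instance, $d_H(u,y)=d_H(u',y)=3$ and $d_H(z,x)=3$).

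The delicate case, and the main obstacle, is $a\in N(u)\cap N(v)$: here $d_H(a,x)=d_H(a,y)=2$, so the only chance to witness eccentricity $3$ is via $z$. This is precisely where the hypothesis $N_G(u)\cap N_G(v)\subseteq {\rm Ecc}_G(u')\cap {\rm Ecc}_G(v')$ is used: it forces $d_G(a,u')=d_G(a,v')=2$, so $a$ is non-adjacent in $H$ to both neighbors of $z$, whence $d_H(a,z)\geq 3$, and combined with $d_H(a,z)\leq 3$ we get equality. Consequently every vertex other than $x$ and $y$ has eccentricity exactly $3$, which confirms that $H$ is a $3$-ASC graph and yields $\theta_3(G)\leq 3$.
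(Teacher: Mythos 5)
Your construction is exactly the one the paper uses (pendant $x$ at $u$, pendant $y$ at $v$, and $z$ joined to $u'$ and $v'$), and your verification — in particular the use of the hypothesis $N_G(u)\cap N_G(v)\subseteq {\rm Ecc}_G(u')\cap {\rm Ecc}_G(v')$ to force $d_H(w,z)=3$ for common neighbors $w$ of $u$ and $v$ — matches the paper's argument step for step. The proof is correct and essentially identical to the published one, with slightly more explicit bookkeeping in the eccentricity checks.
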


\begin{proof} By Lemma~\ref{lem:3-or-4} it suffices to construct a $3$-ASC embedding graph of $G$. Let $H$ be the graph obtained from $G$ by adding three new vertices $x,y,z$ and edges $xu$, $yv$, $zu'$, and $zv'$. The vertices $x$ and $y$ are pendant in $H$ and hence $d_H(x,y)=4$. Consequently, ${\rm ecc}_H(u)={\rm ecc}_H(v)=3$ with ${\rm Ecc}_H(u)=\{y\}$ and ${\rm Ecc}_H(v)=\{x\}$. Since $N_G(u)\cap N_G(v)\subseteq Ecc_G(u')\cap Ecc_G(v')$, any vertex $w\in N_G(u)\cap N_G(v)$ is at distance $2$ from both $u'$ and $v'$ in $G$. Thus any vertex $w\in N_G(u)\cap N_G(v)$ is an eccentric vertex of $z$ in $H$ with $d_H(w,z)=3$. Then ${\rm ecc}_H(z)=3$ holds. This in turn implies that ${\rm ecc}_H(w)=3$ for any vertex $w\in N_G(u)\cap N_G(v)$. Moreover, all the vertices in $N_G(u)\setminus N_G(v)$, including $u'$, have eccentricity $3$ in $H$ with $y$ as their common eccentric vertex. By symmetry, all the vertices in $N_G(v)\setminus N_G(u)$, including $v'$, have eccentricity $3$ in $H$. Finally, each vertex (if any) in ${\rm Ecc}_G(u)\cap {\rm Ecc}_G(v)$ has eccentricity $3$ in $H$ with $x$ and $y$ as its eccentric vertices. We conclude that $H$ is a $3$-ASC embedding graph of $G$.
\end{proof}

By Lemma~\ref{lem:3-or-4}, the $3$-ASC index of a graph $G$ of diameter $2$ is either $3$ or $4$. In the rest of this section we make a partial corresponding classification, where we  will always assume that the graph $G$ considered is of order $n$ and of diameter $2$. We first prove:

\begin{theorem}
\label{thm:no-triple}
If $G$ does not contain a diametrical triple and does not satisfy the assumption of Theorem~\ref{thm:new-added}, then $\theta_3(G) = 4$.
\end{theorem}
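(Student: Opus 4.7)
I plan to argue by contradiction. Assume $\theta_3(G) \le 3$; combined with Lemma~\ref{lem:3-or-4} this gives $\theta_3(G) = 3$, so there is a $3$-ASC graph $H$ containing $G$ as an induced subgraph with exactly three added vertices $V_0 = V(H) \setminus V(G) = \{a,b,c\}$. Let $p, q$ be the unique diametrical pair of $H$, so $d_H(p,q) = 4$ while every other vertex of $H$ has eccentricity $3$. Since $G$ is induced of diameter $2$, any two vertices of $V(G)$ remain at distance at most $2$ in $H$; in particular $\{p,q\} \not\subseteq V(G)$.

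The crucial technical fact is the following observation: if $x \in V_0$ has a neighbor $u \in V(G)$, then for every $v \in V(G)$
\[
d_H(x,v) \;\le\; 1 + d_H(u,v) \;\le\; 1 + d_G(u,v) \;\le\; 3.
\]
From this I plan to deduce that both $p$ and $q$ lie in $V_0$: if, say, $q \in V(G)$, then $p \in V_0$ can have no $V(G)$-neighbor, and so $p$ is attached only inside $V_0$, which forces a pendant-like configuration that either violates $d_H(p,q) = 4$ or places one of the two remaining vertices of $V_0$ at eccentricity greater than $3$. Hence $V_0 = \{p, q, r\}$.

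Fix now a shortest $p$-$q$ path of length $4$ in $H$. In the principal case both $p$ and $q$ have a neighbor in $V(G)$. Choosing any $u \in N_H(p) \cap V(G)$ and $v \in N_H(q) \cap V(G)$, the inequality $d_H(p,q) \le 2 + d_H(u,v) \le 2 + d_G(u,v)$ forces $d_G(u,v) = 2$, so $\{u,v\}$ is a diametrical pair of $G$. The no-diametrical-triple hypothesis then forces $V(G) \setminus \{u,v\} \subseteq N_G(u) \cup N_G(v)$. My aim is to extract from the embedding vertices $u' \in N_G(u) \setminus N_G(v)$ and $v' \in N_G(v) \setminus N_G(u)$ with $N_G(u) \cap N_G(v) \subseteq \mathrm{Ecc}_G(u') \cap \mathrm{Ecc}_G(v')$, thereby matching precisely the hypothesis of Theorem~\ref{thm:new-added}; the witness for the last condition comes from the fact that the third new vertex $r$ cannot shorten the $p$-$q$ distance below $4$. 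This pins down the adjacencies of $r$ to $V(G)$ and, by the symmetry of the length-$4$ path, yields $u', v'$ with the required distance-$2$ properties to every common neighbor $w \in N_G(u) \cap N_G(v)$, contradicting the assumption that $G$ does not satisfy Theorem~\ref{thm:new-added}'s hypothesis.

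The main obstacles I expect are the boundary subcases: (i) when the shortest $p$-$q$ path passes through $r$, so the three internal vertices are not all in $V(G)$; and (ii) when $N_H(p) \cap V(G)$ or $N_H(q) \cap V(G)$ is a singleton, so Theorem~\ref{thm:new-added}'s $u'$ or $v'$ cannot be read off directly from the length-$4$ path. In both subcases the required $u'$ or $v'$ must be located by invoking the no-diametrical-triple hypothesis once more: a failure to find them would place some vertex of $V(G)$ at mutual distance $2$ from both $u$ and $v$, contradicting that $\{u,v\}$ has no diametrical-triple completion. Merging the conclusions of all subcases yields the desired contradiction, and together with the upper bound in Lemma~\ref{lem:3-or-4} this proves $\theta_3(G) = 4$.
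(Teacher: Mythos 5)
Your overall strategy (locate the diametrical pair $p,q$ of the putative embedding $H$ and argue that the hypothesis of Theorem~\ref{thm:new-added} would then have to hold) is a reasonable organizing principle, but two load-bearing steps are not established, and one of them is false as stated. First, your reduction to ``both $p$ and $q$ lie in $V_0$'' rests on the claim that a diametrical vertex $p\in V_0$ whose neighbours all lie inside $V_0$ ``either violates $d_H(p,q)=4$ or places one of the two remaining vertices of $V_0$ at eccentricity greater than $3$.'' That dichotomy is refuted by the paper's own construction in Theorem~\ref{thm:isolated-vertex}: there $y\in V_0$ has $N_H(y)=\{x,z\}\subseteq V_0$, its diametrical partner $w$ lies in $V(G)$, $d_H(w,y)=4$, and both $x$ and $z$ have eccentricity $3$. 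So this configuration cannot be excluded on purely structural grounds; it can only be ruled out for the graphs of the present theorem by invoking the no-diametrical-triple hypothesis, which your sketch does not do at this step. Second, your principal case seeks a contradiction with ``$G$ does not satisfy the assumption of Theorem~\ref{thm:new-added}'' by producing $u'$ and $v'$ as in that theorem --- but that assumption also requires $G$ to be a $2$-SC graph. A graph of diameter $2$ with a dominating vertex (which is compatible with having no diametrical triple, e.g.\ $K_1\oplus(K_2\cup K_2)$) can never satisfy that assumption, so no contradiction of this kind is available and the impossibility of the embedding must be derived from the embedding itself; the paper devotes its two longest cases (eccentricity $4$ versus $3$ of the dominating vertex in $G^*$) to exactly this situation, and your plan has no counterpart.

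Beyond these two points, the endgame --- ``extract $u',v'$\ldots this pins down the adjacencies of $r$\ldots yields $u',v'$ with the required distance-$2$ properties'' --- is a statement of intent rather than an argument: verifying $N_G(u)\cap N_G(v)\subseteq {\rm Ecc}_G(u')\cap{\rm Ecc}_G(v')$ for every common neighbour is precisely where the work lies, and it is not evident that the single remaining vertex $r$ supplies enough structure to force it. For comparison, the paper proceeds by cases on the isomorphism type of $G^*[V_0]$ (a path with a pendant end, a path attached on both sides, $P_2\cup K_1$, $\overline{K_3}$) together with the dominating-vertex dichotomy, in each case exhibiting either a vertex of eccentricity $2$ or a third diametrical vertex. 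If you want to salvage your route, the place to start is a correct proof, using the triple-free hypothesis, of where the diametrical pair of $H$ can sit.
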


\begin{proof}
By Lemma~\ref{lem:3-or-4} it suffices to prove that $\theta_3(G)\neq 3$.  Suppose on the contrary that $G^{*}$ is a $3$-ASC embedding graph of the graph $G$ with $V(G^*)=V(G)\cup \{x,y,z\}$ and set $V_0=\{x,y,z\}$.

 Assume first that $G$ has a vertex $v$  of degree $n-1$. Note that then there exists  a vertex $u$ with ${\rm ecc}_G(u)=2$ adjacent to $v$ in $G$.  We now distinguish two cases.

\medskip\noindent
{\bf Case 1.} ${\rm ecc}_{G^{*}}(v)=4$.\\
In this case the adjacency relation of $V_0$ is $x-y-z$ with one vertex, say $z$, being pendant. Note that $G^*$ has diameter $4$. We  infer that any vertex $u$ with ${\rm ecc}_G(u)=2$ must be adjacent to $x$ in $G^{*}$.   But then ${\rm ecc}_{G^{*}}(x)=2$, which is impossible because of the $3$-ASC property of $G^{*}$.

\medskip\noindent
{\bf Case 2.} ${\rm ecc}_{G^{*}}(v)=3$. \\
In this case, clearly, we have ${\rm Ecc}_{G^*}(v)\subseteq V_0$. Since ${\rm ecc}_{G^*}(v)={\rm ecc}_G(v)+2$, we have $|{\rm Ecc}_{G^*}(v)|\leq 2$. If $|{\rm Ecc}_{G^*}(v)|=2$, we may assume that ${\rm Ecc}_{G^*}(v)=\{y,z\}$. Then $x\in N_{G^*}(y)\cap N_{G^*}(z)$ and there exists a vertex $w$ with $wx\in E(G^*)$. But now we get ${\rm ecc}_{G^*}(w)=2$, which is a contradiction. When $|{\rm Ecc}_{G^*}(v)|=1$, we may assume that ${\rm Ecc}_{G^*}(v)=\{z\}$ and $v-w-y-z$ is a $(v,z)$-path of length $3$. Note that ${\rm ecc}_{G^{*}}(w)=3$. Now we consider the position of $x$ in $G^*$. If $xv\in E(G^*)$, then ${\rm ecc}_{G^{*}}(w)=2$, a contradiction again. If $d_{G^*}(x,v)=2$, we assume that $w^{\prime}\in N_{G^*}(x)\cap N_{G^*}(v)$.  If $ww^{\prime}\in E(G)$, we get the same result ${\rm ecc}_{G^{*}}(w)=2$. We observe that $x$ is not adjacent to any vertex from $N_{G^*}(w)$. If not, we have ${\rm ecc}_{G^{*}}(w)=2$ which is impossible. In particular, $xy\notin E(G^*)$. Now we only need to deal with the subcase when $d_G(w,w^{\prime})=2$. Let $w^{\prime\prime}$ be a common neighbor of $w$ and $w^{\prime}$ in $G$. If $xz\in E(G^*)$, then $xw^{\prime}vwyzx$ is an induced cycle $C_6$. Considering that $G$ contains no diametrical triple, then $G^*$ is a $3$-SC graph. This contradicts to the $3$-ASC property of $G^*$. Therefore $xz\notin E(G^*)$. Combining $d_{G^*}(v,z)=3$ with ${\rm deg}_G(v)=n-1$, we find that $z$ is pendant in $G^*$. Obviously, ${\rm ecc}_{G^*}(z)={\rm ecc}_{G^*}(y)+1$. It follows that ${\rm ecc}_{G^*}(z)=4$ from the $3$-ASC property of $G^*$. Then $2\leq d_{G^*}(y,x)\leq 3$. If $d_{G^*}(y,x)=3$, then there must be a $(y,x)$-path, say $y-y_1-x_1-x$, of length $3$ where $y_1,x_1\in V(G)\setminus\{v\}$.  This deduces ${\rm ecc}_{G^*}(y_1)=2$, a contradiction again. While if $d_{G^*}(y,x)=2$, there must be a neighbor $v^{\prime}$ of $v$ such that $v^{\prime}$ is a common neighbor of $x$ and $y$ in $G^*$. Then ${\rm ecc}_{G^*}(v^{\prime})=2$.  Both these possibilities are impossible from the $3$-ASC property of $G^*$.

\medskip
We have thus proved that no vertex of $G$ can be of degree $n-1$, that is, $G$ must be a $2$-SC graph. If the adjacency relation of $V_0$ in $G^*$ is $x-y-z$, then there is one vertex, say $z$, in $V_0$ being pendant in $G^{*}$. Otherwise, each vertex from  $V_0$ has degree at least $2$ in $G^*$. If ${\deg}_{G^*}(y)>2$, then any vertex in $G^*$ has eccentricity at most $3$. Thus $G^*$ can not be a $3$-ASC graph. Therefore ${\deg}_{G^*}(y)=2$, i.e., none of the vertices in $G$ is adjacent to $y$ in $G^*$. Assume that $xx^{\prime},zz^{\prime}\in E(G^*)$ with $x^{\prime},z^{\prime}\in V(G)$. If  $d_G(x^{\prime},z^{\prime})=2$, then we also claim that any vertex in $G^{*}$ has eccentricity at most $3$ since ${\rm Ecc}_G(u)\cap {\rm Ecc}_G(v)=\emptyset$ for any diametrical pair $u,v$ in $G$ from the condition in this theorem. While if $d_G(x^{\prime},z^{\prime})=1$ or $x^{\prime}=z^{\prime}$, then $G^*$ has a vertex with eccentricity $2$ in $G^*$. All of them are clear contradictions to the $3$-ASC property. Assume that $vx\in E(G^*)$ where $v\in V(G)$. Considering that $G^*$ has diameter $4$, we conclude that any vertex $u$ in ${\rm Ecc}_G(v)$ must be adjacent to $x$ or $y$ in $G^*$. But now ${\rm ecc}_{G^*}(x)=2$, which is impossible.

If $G^*[V_0]=P_2\cup K_1$, without loss of generality, assume that $P_2=xy$ with the adjacency relation $v-x-y$  in $G^*$ where $v$ is an arbitrary vertex in $G$. Considering that $G^*$ has diameter $4$, we have ${\rm ecc}_{G^*}(v)=2$, which is still a contradiction. While $G^*[V_0]=\overline{K_3}$, considering that $G$ does not satisfy the assumption of Theorem~\ref{thm:new-added}, we can find a vertex $w\in V(G)$ such that ${\rm ecc}_{G^*}(w)=2$. A contradiction occurs again, which completes the proof of this theorem.
\end{proof}

By Theorem~\ref{thm:no-triple} it remains to consider those graphs $G$ of diameter $2$ that contain diametrical triples. We next give two sufficient conditions that guarantee that $\theta_3(G) =  3$.

\begin{theorem}\label{thm:isolated-vertex}
 If a graph $G$ contains vertices $u$ and $v$ of a diametrical triple of $G$ such that $\delta\left(G[{\rm Ecc}_G(u)\cap {\rm Ecc}_G(v)]\right) = 0$, then $\theta_3(G) =  3$.
\end{theorem}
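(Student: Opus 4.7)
By Lemma~\ref{lem:3-or-4} we already have $\theta_3(G)\geq 3$, so the task reduces to exhibiting a $3$-ASC graph $H$ of order $|V(G)|+3$ that contains $G$ as an induced subgraph. Let $u,v,w$ be the diametrical triple given by the hypothesis and let $z$ be an isolated vertex of $G[{\rm Ecc}_G(u)\cap {\rm Ecc}_G(v)]$; set $Z={\rm Ecc}_G(u)\cap {\rm Ecc}_G(v)\setminus\{z\}$. My plan is to construct $H$ from $G$ by adjoining three new vertices $a,b,c$ and the edges $au$, $bv$, $ac$, $bc$ together with $at'$ for every $t'\in Z$, and then to verify that $\{c,z\}$ is the unique diametrical pair of $H$ at distance $4$, while every remaining vertex of $H$ has eccentricity exactly $3$.

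The key identity, and the step I expect to require the most care, is $d_H(c,z)=4$; this is precisely where the isolation hypothesis enters. A BFS from $c$ produces the successive layers $\{c\}$, $\{a,b\}$, $\{u,v\}\cup Z$, and $N_G(u)\cup N_G(v)\cup\bigcup_{t'\in Z}N_G(t')$. The vertex $z$ lies outside $N_G(u)\cup N_G(v)$ since $z\in {\rm Ecc}_G(u)\cap {\rm Ecc}_G(v)$, and $z$ avoids every $N_G(t')$ for $t'\in Z$ precisely because $z$ is isolated in $G[{\rm Ecc}_G(u)\cap {\rm Ecc}_G(v)]$. Hence $z$ first appears at level $4$, giving $d_H(c,z)=4$; the symmetric BFS from $z$ then yields ${\rm ecc}_H(z)=4$, with the length-$4$ path $z-s-u-a-c$ (for any $s\in N_G(u)\cap N_G(z)$, which exists since $d_G(u,z)=2$) realizing the distance.

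For the remaining vertices I would verify directly that the eccentricity equals $3$: for $a$ and $b$ the farthest vertex is $z$ (at distance $3$); for $t'\in Z$ the edge $at'$ forces $d_H(t',c)=2$ while $b$ sits at distance exactly $3$; for a common neighbor $t\in N_G(u)\cap N_G(v)$ the vertex $c$ is at distance exactly $3$ since $N_H(c)=\{a,b\}$ shares no vertex with $N_G(t)$; for $t\in N_G(u)\setminus N_G(v)$ or $t\in N_G(v)\setminus N_G(u)$ the vertex $b$, respectively $a$, lies at distance $3$; and $u$, $v$ have $b$, $a$ as their farthest vertices. The main obstacle in the argument is handling the case $|{\rm Ecc}_G(u)\cap {\rm Ecc}_G(v)|\geq 2$: without the extra edges $at'$ each member of $Z$ would be at distance $4$ from $c$ and thus non-central, forcing more than two non-central vertices in $H$. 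Adding those edges draws $Z$ into the center, and the isolation of $z$ is exactly what prevents the same shortcuts from reaching $z$, preserving $d_H(c,z)=4$ and the $3$-ASC structure.
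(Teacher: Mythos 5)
Your construction is the paper's construction up to renaming ($a,c,b$ playing the roles of the paper's $x,y,z$, and your isolated vertex $z$ playing the role of the paper's $w$): a path $a$--$c$--$b$ with $a$ attached to $u$ and to every non-isolated vertex of ${\rm Ecc}_G(u)\cap{\rm Ecc}_G(v)$, and $b$ attached to $v$, with the same diametrical pair and the same case-by-case eccentricity check. The argument is correct and essentially identical to the paper's proof.
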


\begin{proof}
By Lemma~\ref{lem:3-or-4} we only need to prove that $\theta_3(G) \le 3$.

Since $u$ and $v$ belong to a diametrical triple, there exists a vertex $w\in {\rm Ecc}_G(u)\cap{\rm Ecc}_G(v)$, $w\ne u,v$. By the theorem's assumption we may assume that $w$ is an isolated vertex in $G[{\rm Ecc}_G(u)\cap {\rm Ecc}_G(v)]$. Now we construct a graph $H$ as follows. Let $V(H) = V(G) \cup \{x,y,z\}$ and $E(G)\cup \{ux,vz,xy,yz\}\subseteq E(H)$. In addition, we join $x$ to all (if any) vertices from ${\rm Ecc}_G(u)\cap{\rm Ecc}_G(v)$ different from $w$.  This construction is presented in Fig.~\ref{fig:diam-triples}.

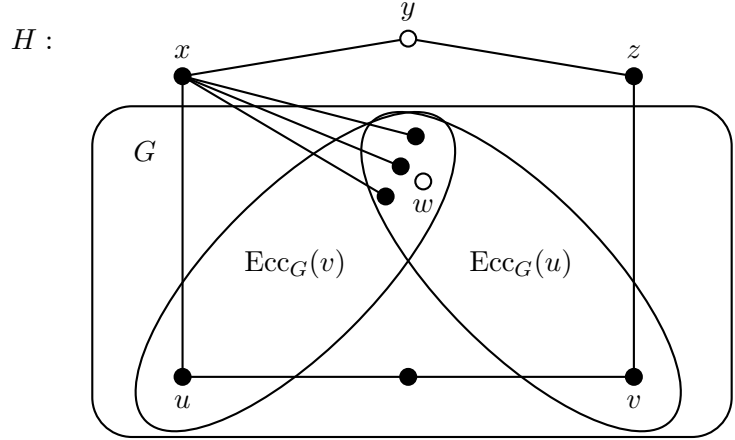
\begin{figure}[ht!]
\begin{center}
\begin{tikzpicture}[scale=1.0,style=thick]
\def\vr{3pt}
\path (0,0) coordinate (xxx);
\path (-3,0) coordinate (u);
\path (3,0) coordinate (v);
\path (-3,4) coordinate (x);
\path (0,4.5) coordinate (y);
\path (3,4) coordinate (z);
\path (0.2,2.6) coordinate (w);
\path (-0.3,2.4) coordinate (w1);
\path (-0.1,2.8) coordinate (w2);
\path (0.1,3.2) coordinate (w3);
\draw (u) -- (xxx) -- (v);
\draw (x) -- (y) -- (z);
\draw (x) -- (w1);
\draw (x) -- (w2);
\draw (x) -- (w3);
\draw (x) -- (u);
\draw (z) -- (v);
\draw (y)  [fill=white] circle (\vr);
\draw (w)  [fill=white] circle (\vr);
\draw (x)  [fill=black] circle (\vr);
\draw (z)  [fill=black] circle (\vr);
\draw (u)  [fill=black] circle (\vr);
\draw (xxx)  [fill=black] circle (\vr);
\draw (v)  [fill=black] circle (\vr);
\draw (w1)  [fill=black] circle (\vr);
\draw (w2)  [fill=black] circle (\vr);
\draw (w3)  [fill=black] circle (\vr);
\draw[rotate around={45:(-1.5,1.4)}] (-1.5,1.4) ellipse (80pt and 30pt);
\draw[rotate around={-45:(1.5,1.4)}] (1.5,1.4) ellipse (80pt and 30pt);
\draw[below] (u)++(0,-0.1) node {$u$};
\draw[below] (v)++(0,-0.1) node {$v$};
\draw[above] (x)++(0,0.1) node {$x$};
\draw[above] (y)++(0,0.1) node {$y$};
\draw[above] (z)++(0,0.1) node {$z$};
\draw[below] (w)++(0,-0.1) node {$w$};
\draw[rounded corners=15pt] (-4.2,-0.8) rectangle (4.3,3.6);
\draw (-3.5,3) node {$G$};
\draw (-5,4.5) node {$H:$};
\draw (-1.5,1.5) node {${\rm Ecc}_G(v)$};
\draw (1.5,1.5) node {${\rm Ecc}_G(u)$};
\end{tikzpicture}
\end{center}
\caption{Construction from the proof of Theorem~\ref{thm:isolated-vertex}}
\label{fig:diam-triples}
\end{figure}

Since $w$ is an isolated vertex in $G[{\rm Ecc}_G(u)\cap {\rm Ecc}_G(v)]$, we have $d_G(w,w^{\prime})=2$ for any vertex $w^{\prime}\in \left({\rm Ecc}_G(u)\cap{\rm Ecc}_G(v)\right)\setminus\{w\}$. Clearly, $d_G(w,u)=2=d_G(w,v)$. So $d_H(w,y)=4$. Any vertex from $N_G(u)\cap N_G(v)$ has eccentricity $3$ since $y$ is its unique eccentric vertex with distance $3$ to it in $H$. In addition, ${\rm ecc}_H(u)=3$ from the fact that ${\rm Ecc}_H(u)=\{z\}$ with $d_H(u,z)=3$. Similarly, we also have ${\rm ecc}_H(v)=3$. For any vertex $w^{\prime}\in {\rm Ecc}_G(u)\cap{\rm Ecc}_G(v)\setminus\{w\}$, considering that $w^{\prime}x\in E(H)$, we conclude that ${\rm ecc}_H(w^{\prime})=3$ since $z$ is the only one vertex with the largest distance $3$ to $w^{\prime}$ in $H$. For any vertex $v^{\prime}\in N_G(u)\setminus N_G(v)$, we have ${\rm Ecc}_H(v^{\prime})=\{y,z\}$ with $d_H(v^{\prime},y)=3=d_H(v^{\prime},z)$ being largest. Then it follows that ${\rm ecc}_H(v^{\prime})=3$. Moreover, ${\rm ecc}_H(v^{\prime})=3$ with ${\rm Ecc}_H(v^{\prime})=\{y,x\}$ for any vertex $v^{\prime}\in N_G(v)\setminus N_G(u)$ by an analogous reasoning.  We conclude that $H$ is a $3$-ASC embedding graph of $G$.
\end{proof}

\begin{theorem}
\label{thm:P3}
 If a graph $G$ contains vertices $u$ and $v$ of a diametrical triple of $G$ such that $G[{\rm Ecc}_G(u)\cap {\rm Ecc}_G(v)]$ contains $P_3$ as an induced subgraph, then $\theta_3(G) =  3$.
\end{theorem}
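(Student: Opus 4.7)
By Lemma~\ref{lem:3-or-4} we have $\theta_3(G)\ge 3$, so it suffices to construct a $3$-ASC embedding graph $H$ of $G$ with $|V(H)|=|V(G)|+3$. Write the induced $P_3$ in $G[{\rm Ecc}_G(u)\cap {\rm Ecc}_G(v)]$ as $a$-$b$-$c$, so that $ab,bc\in E(G)$ but $ac\notin E(G)$; note that each of $a,b,c$ lies at distance $2$ from both $u$ and $v$ in $G$.

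The plan is to mimic the construction from the proof of Theorem~\ref{thm:isolated-vertex}: add three new vertices $x,y,z$, insert the path $x$-$y$-$z$ among them, and add the edges $ux$ and $vz$. To force exactly one vertex of $G$ to be diametrical with $y$ at distance $4$, I would designate one of the three $P_3$ vertices --- most naturally an endpoint, say $a$ --- as the intended diametrical partner $w^{\ast}$, and then choose the remaining edges between $\{x,z\}$ and $V(G)$ so that (i) no $G$-neighbour of $w^{\ast}$ appears in the $G$-neighbourhoods of either $x$ or $z$, which will force $d_H(w^{\ast},x)=d_H(w^{\ast},z)=3$, and (ii) every other vertex of $V(G)$ acquires a length-$2$ path to $\{x,z\}$ from the added edges. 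A key ingredient here is to attach the $P_3$ vertex $c$ (non-adjacent to $w^{\ast}=a$) to $x$ on one side and supplement with edges into $({\rm Ecc}_G(u)\cap{\rm Ecc}_G(v))\setminus N_G[a]$ on both sides; this shortcuts distances from $b$ and other neighbours of $a$ in the intersection without shortening the distance from $a$ itself.

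The verification would then follow the same pattern as in Theorem~\ref{thm:isolated-vertex}: first $d_H(w^{\ast},y)=4$ is checked directly from the exclusion of $N_G[w^{\ast}]$ from the $G$-neighbourhoods of $x$ and $z$ (the lower bound) and from the path $w^{\ast}$-$g$-$u$-$x$-$y$ of length $4$ through a common $G$-neighbour $g$ of $w^{\ast}$ and $u$ (the upper bound). The eccentricities of $x,y,z$ are handled by the standard isometric $6$-cycle on $u,x,y,z,v$ closed by any length-$2$ path from $v$ back to $u$ in $G$. The remaining vertices of $G$ are then treated by a case analysis on their position with respect to the partition $\{u,v\}$, $N_G(u)\triangle N_G(v)$, $N_G(u)\cap N_G(v)$, $({\rm Ecc}_G(u)\cap{\rm Ecc}_G(v))\setminus\{w^{\ast}\}$.

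The main obstacle will be the last class, namely the vertices of $({\rm Ecc}_G(u)\cap{\rm Ecc}_G(v))\setminus\{w^{\ast}\}$ --- especially $b$ and any additional vertex in $N_G(w^{\ast})$ that lies in this intersection --- which must all reach $x$ or $z$ in exactly two steps. This is precisely where the induced-$P_3$ hypothesis (rather than the mere existence of a diametrical triple, or of an isolated vertex as in Theorem~\ref{thm:isolated-vertex}) is essential: the non-adjacency $ac\notin E(G)$ allows $c$ to be attached to $x$ without creating a length-$2$ detour from $w^{\ast}=a$ to $x$, while simultaneously producing a length-$2$ path $b$-$c$-$x$ that brings $b$'s eccentricity down to $3$. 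Fine-tuning the additional edges into $({\rm Ecc}_G(u)\cap{\rm Ecc}_G(v))\setminus N_G[a]$ symmetrically on the $z$-side should then take care of the remaining intersection vertices.
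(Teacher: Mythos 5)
Your construction skeleton is the same as the paper's: write the induced path as $w_1$-$w_2$-$w_3$ (your $a$-$b$-$c$), add the path $x$-$y$-$z$ together with the edges $ux$, $vz$, $w_3x$, designate the endpoint $w_1$ as the partner of $y$ in the diametrical pair, and join the vertices of $\bigl({\rm Ecc}_G(u)\cap{\rm Ecc}_G(v)\bigr)\setminus N_G[w_1]$ to the new vertices. But your one concrete departure from that scheme is fatal: you propose to add these supplementary edges ``on both sides'', i.e.\ to both $x$ and $z$. If a vertex $s\in\bigl({\rm Ecc}_G(u)\cap{\rm Ecc}_G(v)\bigr)\setminus N_G[w_1]$ (for instance $s=w_3=c$ itself, which always lies in this set since $w_1w_3\notin E(G)$) is adjacent to both $x$ and $z$ in $H$, then $d_H(s,x)=d_H(s,z)=1$, $d_H(s,y)=2$, and $d_H(s,t)\le d_G(s,t)\le 2$ for every $t\in V(G)$ because ${\rm diam}(G)=2$; hence ${\rm ecc}_H(s)=2$, the radius of $H$ drops to $2$, and $H$ cannot be a $3$-ASC graph. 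The paper joins these vertices to $x$ only; that is precisely what keeps $d_H(s,z)=3$ and makes $z$ the eccentric vertex certifying ${\rm ecc}_H(s)=3$.

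Beyond that, the proposal never closes the case you yourself single out as ``the main obstacle'': the vertices of $N_G(w_1)\cap{\rm Ecc}_G(u)\cap{\rm Ecc}_G(v)$, which receive no new edge and are adjacent to none of $u$, $v$, $y$, $z$, and therefore must reach $x$ in two steps if their distance to $y$ is to stay at $3$ (their distance to $z$ is forced to be $3$, since $N_H(z)=\{y,v\}$). For $b=w_2$ this is exactly where the induced $P_3$ enters, via the path $w_2$-$w_3$-$x$; for an arbitrary such vertex one needs a neighbour inside $\bigl({\rm Ecc}_G(u)\cap{\rm Ecc}_G(v)\bigr)\setminus N_G[w_1]$, and your text ends with ``should then take care of'' rather than an argument. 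So the proposal identifies the right construction and the right difficulty, but the edge set it actually specifies produces a graph that is not $3$-ASC, and the decisive verification step is left unproved.
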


\begin{proof}
Applying Lemma~\ref{lem:3-or-4} again we only need to prove that $\theta_3(G) \le 3$.

Let $w_1w_2w_3$ be an induced path in $G[{\rm Ecc}_G(u)\cap {\rm Ecc}_G(v)]$. We construct a graph $H$ obtained from $G$ by adding vertices $x,y,z$ and edges $xy, yz, ux, vz, w_3x$. Moreover, we join all vertices (if any) from $\left({\rm Ecc}_G(u)\cap {\rm Ecc}_G(v)\right)\setminus N_G[w_1]$ with the vertex $x$ in $H$.  See Fig.~\ref{fig:diam-triples-P3} for an illustration of the construction.

\begin{figure}[ht!]
\begin{center}
\begin{tikzpicture}[scale=1.2,style=thick]
\def\vr{3pt}
\path (0,0) coordinate (xxx);
\path (-3,0) coordinate (u);
\path (3,0) coordinate (v);
\path (-3,4) coordinate (x);
\path (0,4.5) coordinate (y);
\path (3,4) coordinate (z);
\path (-0.7,2.5) coordinate (w3);
\path (0,2.0) coordinate (w2);
\path (0.5,2.5) coordinate (w1);
\draw (u) -- (xxx) -- (v);
\draw (x) -- (y) -- (z);
\draw (x) -- (w3);
\draw (x) -- (u);
\draw (z) -- (v);
\draw (w1) -- (w2) -- (w3);
\draw[line width=1mm] (x) -- (-0.3,2.8);
\draw (y)  [fill=white] circle (\vr);
\draw (w1)  [fill=white] circle (\vr);
\draw (x)  [fill=black] circle (\vr);
\draw (z)  [fill=black] circle (\vr);
\draw (u)  [fill=black] circle (\vr);
\draw (xxx)  [fill=black] circle (\vr);
\draw (v)  [fill=black] circle (\vr);
\draw (w2)  [fill=black] circle (\vr);
\draw (w3)  [fill=black] circle (\vr);
\draw[rotate around={30:(-1.5,1.4)}] (-1.5,1.4) ellipse (80pt and 35pt);
\draw[rotate around={-30:(1.5,1.4)}] (1.5,1.4) ellipse (80pt and 35pt);
\draw[rotate around={45:(0.20,2.20)}] (0.20,2.20) ellipse (25pt and 15pt);
\draw[left] (u)++(-0.1,0) node {$u$};
\draw[right] (v)++(0.1,0) node {$v$};
\draw[above] (x)++(0,0.1) node {$x$};
\draw[above] (y)++(0,0.1) node {$y$};
\draw[above] (z)++(0,0.1) node {$z$};
\draw[right] (w2)++(0.1,0) node {$w_2$};
\draw[below] (w3)++(0,-0.10) node {$w_3$};
\draw[above] (w1)++(0,0) node {$w_1$};
\draw[rounded corners=15pt] (-4.2,-0.8) rectangle (4.3,3.6);
\draw (-3.5,3) node {$G$};
\draw (-4,4.5) node {$H:$};
\draw (-1.5,1.5) node {${\rm Ecc}_G(v)$};
\draw (1.5,1.5) node {${\rm Ecc}_G(u)$};
\draw [->] (2,3) -- (0.7,2.3);
\draw (2.2,3.2) node {$N[w_1]$};
\end{tikzpicture}
\end{center}
\caption{Construction from the proof of Theorem~\ref{thm:P3}}
\label{fig:diam-triples-P3}
\end{figure}
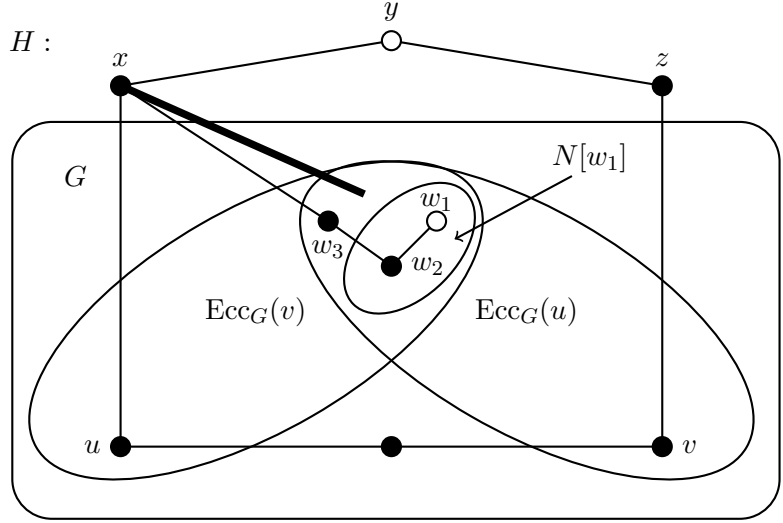

Thus the neighborhood of any vertex from $N_G[w_1]$ or not in $\left({\rm Ecc}_G(u)\cap {\rm Ecc}_G(v)\right)\cup \{u,v\}$ remains unchanged in $H$. From the construction of $H$ we first infer that $d_H(w_1,y)=4$ since $w_1w_2w_3xy$ is a shortest path between them. Any vertex from ${\rm Ecc}_G(u)\cap {\rm Ecc}_G(v)$, except $w_2$ and $w_1$, has eccentricity $3$ in $H$ because $z$ is its unique eccentric vertex and is at distance $3$. Also we have ${\rm ecc}_H(w_2)=3$ from the fact that ${\rm Ecc}_H(w_2)=\{y,z\}$ with $d_H(w_2,y)=3=d_H(w_2,z)$. Any vertex  from $N_G(u)\cap N_G(v)$ has eccentricity $3$ in $H$ because $y$ is its unique eccentric vertex and is at distance $3$.  Furthermore,  ${\rm ecc}_H(w)=3$ with ${\rm Ecc}_H(w)=\{y,z\}$ for any vertex $w\in N_G(u)\setminus N_G(v)$. By symmetry, we get ${\rm ecc}_H(w)=3$ for any vertex  $w\in N_G(v)\setminus N_G(u)$. Also we claim that ${\rm ecc}_H(x)=3={\rm ecc}_H(z)$ because $w_1\in {\rm Ecc}_H(x)$ and $w_2\in {\rm Ecc}_H(z)$, respectively, in $H$. Obviously we have ${\rm ecc}_H(u)=3={\rm ecc}_H(v)$ from ${\rm Ecc}_H(u)=\{z\}$ and ${\rm Ecc}_H(v)=\{x\}$, respectively. Therefore $H$ is a $3$-ASC embedding graph of $G$.
\end{proof}

Note that the Petersen graph  $PG$ fulfills the assumption of Theorem~\ref{thm:new-added}, hence $\theta_3(PG) = 3$. But  $PG$ does not fulfill the condition of Theorem~\ref{thm:isolated-vertex}. Further, $\theta_3(K_{1,3})=3$ holds by Theorem~\ref{thm:isolated-vertex}. However, $K_{1,3}$ does not fulfill the assumption of Theorem~\ref{thm:new-added} or that of Theorem~\ref{thm:P3}. Therefore, none of Theorems \ref{thm:new-added}, \ref{thm:isolated-vertex}, and \ref{thm:P3} gives a necessary condition for $\theta_3(G)=3$.

In view of Theorems~\ref{thm:isolated-vertex} and~\ref{thm:P3} one might wonder whether $\theta_3(G) =  3$ holds as soon as $G$ contains a diametrical triple. This is not the case as it can be verified on the graph $K_1 \oplus 3P_2$. More generally, we have the following result.

\begin{theorem}
\label{thm:union-of-complete}
If for any non-adjacent vertices $u, v$ the subgraph induced by ${\rm Ecc}_G(u)\cap {\rm Ecc}_G(v)$ is a disjoint union of complete graphs on at least two vertices  such that any outer neighbor of each vertex from any of these complete subgraphs belongs to $N_G(u)\cap N_G(v)$ and $G$ does not satisfy the assumption of Theorem~\ref{thm:new-added}, then $\theta_3(G)=4$.
\end{theorem}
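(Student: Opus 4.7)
The plan is to argue by contradiction. By Lemma~\ref{lem:3-or-4} we have $\theta_3(G)\ge 3$, so it suffices to show $\theta_3(G)\ne 3$. Suppose there were a $3$-ASC embedding $G^*$ of $G$ with $V_0=V(G^*)\setminus V(G)=\{x,y,z\}$. Since $G$ has diameter $2$, any two vertices of $V(G)$ are at distance at most $2$ in $G^*$, so the two diametrical vertices of $G^*$ cannot both lie in $V(G)$. I would then perform a case analysis on the induced subgraph $G^*[V_0]$, in the spirit of the proofs of Theorems~\ref{thm:no-triple} and~\ref{thm:new-added}.

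The cases $G^*[V_0]\cong K_3$, $G^*[V_0]\cong P_3$, and $G^*[V_0]\cong P_2\cup K_1$ can be ruled out by elementary distance arguments that use only the diameter-$2$ hypothesis on $G$; in each, some vertex is shown to have eccentricity at most $2$ in $G^*$, violating the $3$-ASC property. For concreteness, in the case $G^*[V_0]\cong P_3$ with $y$ as the middle vertex and diametrical pair $\{x,v^{\sharp}\}$, $v^{\sharp}\in V(G)$: one first shows $y$ has no neighbor in $V(G)$ (else such a neighbor $w\ne v^{\sharp}$ has ${\rm ecc}_{G^*}(w)\le 2$), then that $N_{G^*}(x)\cap V(G)=\emptyset$ (else, for any $a$ in this set, $d_{G^*}(v^{\sharp},x)\le d_G(v^{\sharp},a)+1\le 3$), and finally $N_{G^*}(z)\cap V(G)=V(G)\setminus\{v^{\sharp}\}$, which forces ${\rm ecc}_{G^*}(z)\le 2$. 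The $K_3$ case forces $V_0$ to be disconnected from $V(G)$, and the $P_2\cup K_1$ case is handled subcase-by-subcase in the same spirit: each subcase leads either to a $V(G)$-vertex of eccentricity at most $2$, or to the isolated $V_0$-vertex $z$ having $d_{G^*}(z,y)\ge 4$, contradicting its centrality.

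The main case is $G^*[V_0]\cong\overline{K_3}$, precisely the configuration used in Theorem~\ref{thm:new-added}. Mixed diametrical pairs $\{x,w\}$ with $w\in V(G)$ are ruled out immediately: since $x$'s neighbors lie in $X_0:=N_{G^*}(x)\cap V(G)$, we get $d_{G^*}(w,x)\le 1+d_G(w,X_0)\le 3$ (and $X_0=\emptyset$ would make $x$ isolated in $G^*$, equally impossible). Hence the diametrical pair lies in $V_0$; say $d_{G^*}(x,y)=4$ with $z$ central. Setting $Y_0=N_{G^*}(y)\cap V(G)$ and $Z_0=N_{G^*}(z)\cap V(G)$, the condition $d_{G^*}(x,y)=4$ forces $X_0\cap Y_0=\emptyset$ and that $G$ has no edges between $X_0$ and $Y_0$. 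Fixing $u\in X_0$ and $v\in Y_0$ gives $d_G(u,v)=2$; by the hypothesis $D_{uv}:={\rm Ecc}_G(u)\cap {\rm Ecc}_G(v)$ is a disjoint union of cliques of size at least $2$ with all outer neighbors in $N_G(u)\cap N_G(v)$.

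The final step is to verify the assumption of Theorem~\ref{thm:new-added} for the pair $(u,v)$, thereby contradicting our hypothesis. The plan is to choose $u'\in X_0\setminus\{u\}$ and $v'\in Y_0\setminus\{v\}$; their existence and memberships in $N_G(u)\setminus N_G(v)$ and $N_G(v)\setminus N_G(u)$ respectively are to be forced by the clique decomposition of $D_{uv}$ together with the $3$-ASC constraints on $V(G)$-vertices, using in particular that every clique $K^{(i)}$ in $D_{uv}$ has size at least $2$ so that memberships in $X_0$ or $Y_0$ propagate along clique-mates and cannot be absorbed without an eccentricity contradiction. Finally, the containment $N_G(u)\cap N_G(v)\subseteq {\rm Ecc}_G(u')\cap {\rm Ecc}_G(v')$ is to be checked by showing that any $c\in N_G(u)\cap N_G(v)$ adjacent in $G$ to $u'$ or $v'$ would either shorten $d_{G^*}(x,y)$ below $4$ or collapse ${\rm ecc}_{G^*}(c)$ to at most $2$. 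This final extraction, which mixes the clique decomposition of $D_{uv}$ with the eccentricity bookkeeping on $Z_0$-vertices, is the main technical obstacle of the proof.
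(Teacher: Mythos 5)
There is a genuine gap, and it sits exactly where the theorem's main hypothesis has to do its work. You claim that the case $G^*[V_0]\cong P_3$ ``can be ruled out by elementary distance arguments that use only the diameter-$2$ hypothesis on $G$.'' That cannot be true: the $P_3$ configuration (a path $x-y-z$ on the new vertices, with $x$ and $z$ attached to $G$ and $y$ of degree $2$) is precisely the construction used in Theorems~\ref{thm:isolated-vertex} and~\ref{thm:P3} to produce genuine $3$-vertex $3$-ASC embeddings of diameter-$2$ graphs, with diametrical pair $\{y,w\}$ for some $w\in V(G)$. So no argument using only ${\rm diam}(G)=2$ can kill this case. Your sketch for $P_3$ only treats the diametrical pair $\{x,v^{\sharp}\}$ with $x$ an \emph{endpoint} of the path; the case you must handle is the mixed pair $\{y,w\}$ with $y$ the \emph{middle} vertex, and this is where the paper deploys the clique hypothesis. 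Concretely, with $xu,zv\in E(G^*)$ and $d_G(u,v)=2$, one shows no vertex of $N_G(u)\cap N_G(v)$ can be adjacent to $V_0$; then either every clique of $G[{\rm Ecc}_G(u)\cap{\rm Ecc}_G(v)]$ meets $N_{G^*}(V_0)$, forcing every vertex to have eccentricity at most $3$, or some clique avoids $N_{G^*}(V_0)$ entirely, and then (because it has at least two vertices and all its outer neighbors lie in $N_G(u)\cap N_G(v)$) it contributes \emph{two} vertices at distance $4$ from $y$, giving at least three non-central vertices. Both the ``at least two vertices'' and the ``outer neighbors'' conditions are indispensable here (cf.\ the graph of Fig.~\ref{fig:strange-example}), so a proof that never invokes them in this case cannot be correct.

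Secondary issues: your main case $\overline{K_3}$ is where the paper instead uses the \emph{other} hypothesis (that $G$ fails the assumption of Theorem~\ref{thm:new-added}), and your plan for extracting that configuration is both admittedly incomplete and shaky in its details --- a vertex $u'\in X_0\setminus\{u\}$ need not lie in $N_G(u)$, and $X_0$ may well be a singleton (in the relevant situation $x$ is pendant), so the intended choice of $u',v'$ does not obviously exist. The correct route in that case is to take a diametrical path $x-x'-w-y'-y$ and argue that, since $G$ does not satisfy the assumption of Theorem~\ref{thm:new-added}, the middle vertex $w$ ends up with eccentricity $2$. In short: the overall architecture (contradiction plus case analysis on $G^*[V_0]$) matches the paper, but the hypotheses are applied in the wrong cases, and the case that actually requires the clique condition is dismissed without it.
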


\begin{proof}
By Lemma~\ref{lem:3-or-4} it suffices to prove that $\theta_3(G)>3$. Suppose on the contrary that $\theta_3(G)=3$ and let $H$ be a $3$-ASC graph obtained from $G$ by adding vertices $V_0 = \{x,y,z\}$ and some edges  (while keeping $G$ to be induced in $H$). We distinguish two cases.

\medskip\noindent
\textbf{Case 1.} No vertex from $V_0$ is pendant in $H$. \\
 We first claim that if $H[V_0]$ is complete or disconnected, then we can find some vertex with eccentricity $2$ in $H$, contradicting the $3$-ASC property of $H$.

 Suppose first that $H[V_0]$ is complete.  Then considering that $H$ has exactly two diametrical vertices with eccentricity $4$, we observe that one vertex, say $x$, from $V_0$ has eccentricity $4$ in $H$. Let $w\in V(G)$ be the other diametrical vertex in $H$. Then there is a vertex $w^{\prime}\in V(G)$ with $d_H(w^{\prime},x)=d_H(w^{\prime},w)=2$. It follows that ${\rm ecc}_H(w^{\prime})=2$.

 Assume next that $H[V_0]$ is disconnected. Then $H[V_0]\cong K_1\bigcup P_2$ or consists of three isolated vertices. In the former case, we can assume that $x$ is isolated in $H[V_0]$. Then $2\leq\min\{d_H(x,y),d_H(x,z)\}\leq 3$ since $H$ is a $3$-ASC graph. Without loss of generality assume that $\min\{d_H(x,y),d_H(x,z)\}=d_H(x,y)$. If $d_H(x,y)=2$, then there is a vertex $w\in V(G)$ as a common neighbor of $x$ and $y$ in $H$. Thus we get ${\rm ecc}_H(w)=2$. When $d_H(x,y)=3$, we assume that $xx^{\prime},yy^{\prime}\in E(H)$ where $x^{\prime},y^{\prime}\in V(G)$. Then $d_G(x^{\prime},y^{\prime})=1$ and ${\rm ecc}_H(y^{\prime})=2$. In the latter case, we conclude that there is a diametrical vertex in $V_0$ of $H$, say ${\rm ecc}_H(x)=4$. Then the other diametrical vertex of $H$ must be also in $V_0$. Otherwise, assume that $w\in V(G)$ has eccentricity $4$ in $H$. Then the distance from $w$ to any neighbor of $x$ in $G$ is $3$, which is impossible because $G$ has diameter $2$. Assume that ${\rm ecc}_H(y)=4$. Let $z^{\prime}$ be an arbitrary neighbor of $z$ in $H$ and $x^{\prime}$, $y^{\prime}$ the arbitrary neighbors of $x$ and $y$, respectively, in $H$. First we claim that neither of $x$ and $y$ belongs to ${\rm Ecc}_H(z)$. If not, without loss of generality let $x\in {\rm Ecc}_H(z)$. Considering that ${\rm ecc}_H(z^{\prime})=3$, we have $d_H(z^{\prime},x)=3$. Then $d_H(z,x)=4$, which is a clear contradiction. Thus we have $d_H(x,z)=2=d_H(y,z)$ from ${\rm ecc}_H(z)=3$. Now we conclude that $zx^{\prime},zy^{\prime}\in E(H)$. It follows that ${\rm ecc}_H(w)=2$ for any vertex $w\in N_G(x^{\prime})\bigcap N_G(y^{\prime})$.
This proves the claim.

So we are left with the situation when $H[V_0]\cong P_3$. Assume that the adjacency relation of $V_0$ in $H$ is $x-y-z$.  By the case assumption, there exists edges $xu,zv\in E(H)$.  Note first that $u\ne v$, for otherwise we would have ${\rm ecc}_H(u)=2$. The same conclusion holds if $d_H(u,v)=1$.

 Hence $d_H(u,v)=2$ must hold. By the theorem's assumption, ${\rm Ecc}_G(u)\cap {\rm Ecc}_G(v)$ induces a disjoint union of complete graphs $K_{n_k}$, $n_k\ge 2$. Note first that  $w$ is not adjacent to any vertex from $V_0$ for any vertex $w\in N_G(u)\cap N_G(v)$, for otherwise ${\rm ecc}_H(w)=2$ would hold.   Recall that $G$ has diameter $2$. If there is at least one vertex in each $K_{n_k}$ adjacent to  some vertex from $V_0$, then the distance is at most $3$ from any vertex from $G[{\rm Ecc}_G(u)\cap {\rm Ecc}_G(v)]$ to any vertex in $V_0$ of $H$. Thus any vertex in $H$ has eccentricity at most $3$,  a contradiction to the fact that $H$ is a $3$-ASC graph. Hence there must be some complete component $K_{n_i}$, from $G[{\rm Ecc}_G(u)\cap {\rm Ecc}_G(v)]$, such that none of vertices of $K_{n_i}$ is adjacent to any vertex from $V_0$ in $H$.  Let $w_1, w_2$ be arbitrary vertices of $K_{n_i}$. Recalling that any vertex from $V_0$ is not adjacent to any vertex from $N_G(u)\cap N_G(v)$ in $H$, from the theorem's condition that any outer neighbor of any vertex of these complete subgraphs belongs to $N_G(u)\cap N_G(v)$, we find that that $d_H(w_1,y) = d_H(w_2,y)= 4$.  We conclude that $H$ is not a $3$-ASC graph.

\medskip\noindent
\textbf{Case 2.} $V_0$ contains at least one pendant vertex of $H$. \\
Denote by $h$ the number of pendant vertices from $V_0$ in $H$. If $h=1$, we may assume that ${\rm deg}_H(z)=1$. Considering that $H$ is $3$-ASC graph, we claim that ${\rm ecc}_H(z)=4$. Assume that $z-z^{\prime}-w_0-w^{\prime}-w$ is a diametrical path in $H$. Then $d_H(z^{\prime},w)=3$, which implies that at most one vertex from $\{w,z^{\prime}\}$ belongs to $V(G)$. If $w_0\in V(G)$, then ${\rm ecc}_H(w_0)=2$, which is a contradiction. While $w_0\notin V(G)$, without loss of generality, we may assume that $w_0=x$. Next we divide into the following two subcases. For $z^{\prime}\in V(G)$, recalling that $z^{\prime},w$ can not belong to $V(G)$ simultaneously, we have $w\notin V(G)$, i.e., $w=y$. In this subcase, we claim that $z^{\prime}$ and $w^{\prime}$ forms a diametrical pair in $G$. Let $w^{*}\in N_G(w^{\prime})\cap N_G(z^{\prime})$. Then the distance from $w^*$ to any vertex in $V_0$ is at most $2$ in $H$. Recalling that $G$ has diameter $2$. Then ${\rm ecc}_H(w^{*})=2$, which is also impossible. If $z^{\prime}\notin V(G)$, then $z^{\prime}=y$. So the adjacency relation of $V_0$ in $H$ is $x-y-z$ with $z$ being pendant. If ${\rm deg}_G(w^{\prime})=n-1$, then ${\rm ecc}_H(x)=2$ clearly. If ${\rm ecc}_G(w^{\prime})=2$, we consider the vertices from ${\rm Ecc}_G(w^{\prime})$. Now we conclude that $w_1$ is adjacent to $x$ or $y$ in $H$ for any vertex $w_1\in {\rm Ecc}_G(w^{\prime})$. Otherwise, $d_H(w_1,z)> 4$, contradicting to the $3$-ASC property of $H$. Therefore, we get ${\rm ecc}_H(x)=2$, a clear contradiction comes again.

If $h=2$, without loss of generality, we assume that $x,y$ are both pendant in $H$. Clearly, ${\rm eec}_H(x)=4={\rm eec}_H(y)$. Let $xx^{\prime},yy^{\prime}\in E(H)$. Since $H$ is $3$-ASC graph, there is a diametrical path $x-x^{\prime}-w-y^{\prime}-y$ with $w\in V(G)\cup \{z\}$. Then, considering that $G$ does not satisfy the assumption of Theorem~\ref{thm:new-added}, we have ${\rm ecc}_H(w)=2$. This is an obvious contradiction. If $h=3$, then each of $x,y$ and $z$ has eccentricity $4$ in $H$, contradicting the definition of $3$-ASC graph.
\end{proof}

Consider the graph $G$ shown in Fig.~\ref{fig:strange-example}. It is of diameter $2$ and fulfils the first condition of Theorem~\ref{thm:union-of-complete}. For the selected vertices $u$ and $v$ we have ${\rm Ecc}_G(u)\cap {\rm Ecc}_G(v) = \{x,y\}$. Note that each of these vertices has an outer neighbor that does not belong to $N_G(u)\cap N_G(v)$. Hence the second condition of the theorem is not fulfilled. In the figure an embedding of $G$ into a $3$-ASC graph is also shown, therefore $\theta_3(G) = 3$. In summary, the technical condition about the outer neighbors from Theorem~\ref{thm:union-of-complete} cannot be avoided.

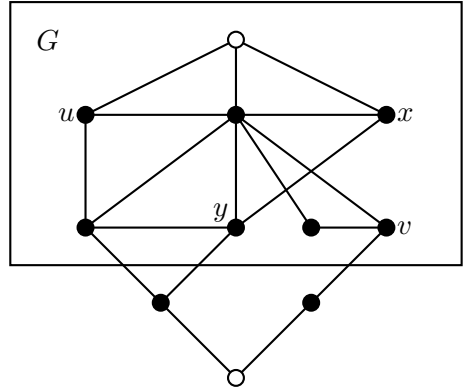
\begin{figure}[ht!]
\begin{center}
\begin{tikzpicture}[scale=1,style=thick]
\def\vr{3pt}
\path (2,0) coordinate (a);
\path (1,1) coordinate (b);
\path (3,1) coordinate (c);
\path (0,2) coordinate (d);
\path (2,2) coordinate (e);
\path (3,2) coordinate (f);
\path (4,2) coordinate (v);
\path (0,3.5) coordinate (u);
\path (2,3.5) coordinate (g);
\path (4,3.5) coordinate (h);
\path (2,4.5) coordinate (i);
\draw (a) -- (b) -- (d) -- (u) -- (i) -- (h) -- (e) -- (b);
\draw (a) -- (c) -- (v) -- (f) -- (g) -- (d) -- (e);
\draw (v) -- (g) -- (i);
\draw (u) -- (g) -- (h);
\draw (g) -- (e);
\draw (-1,1.5) rectangle (5,5);
\draw (a) [fill=white] circle (\vr);
\draw (b) [fill=black] circle (\vr);
\draw (c) [fill=black] circle (\vr);
\draw (d) [fill=black] circle (\vr);
\draw (e) [fill=black] circle (\vr);
\draw (f) [fill=black] circle (\vr);
\draw (g) [fill=black] circle (\vr);
\draw (h) [fill=black] circle (\vr);
\draw (i) [fill=white] circle (\vr);
\draw (u) [fill=black] circle (\vr);
\draw (v) [fill=black] circle (\vr);
\draw [left] (u) node {$u$};
\draw [right] (v) node {$v$};
\draw [right] (h) node {$x$};
\draw (1.8,2.2) node {$y$};
\draw (-0.5,4.5) node {$G$};
\end{tikzpicture}
\end{center}
\caption{Graph $G$ with $\theta_3(G) = 3$}
\label{fig:strange-example}
\end{figure}

Recall that the well-known cocktail party graph $CP(n)$ is a complete graph on $2n$ vertices with a $1$-factor removed. By Theorem~\ref{thm:no-triple}, $\theta_3(CP(n))=4$ but $CP(n)$ does not fulfill the condition of Theorem~\ref{thm:union-of-complete}. In addition, from Theorem~\ref{thm:union-of-complete} we get that $\theta_3(K_1\oplus tP_2)=4$ holds for any $t\ge 3$, but $K_1\oplus tP_2$, $t\ge 3$, does not fulfill the condition of Theorem \ref{thm:no-triple}. Therefore, Theorems~\ref{thm:no-triple} and \ref{thm:union-of-complete} give sufficient but not necessary conditions for $G$ having $\theta_3(G)=4$.

\section{$3$-ASC index of some graphs with large diameter}
\label{sec:large-diam}

 In this section we will determine the $3$-ASC index of paths  of order $n\geq 1$, cycles of order $n\geq 3$, and trees of order  $n\geq 10$ and diameter $n-2$.

\subsection{Paths}

 In this subsection we will always assume that $V(P_n)=\{v_1,\ldots,v_{n}\}$ and $E(P_n)=\{v_iv_{i+1}:\ i=1,\ldots, n-1\}$. We will also use the graph  $C_{2k}^{*}$, $k\ge 2$, which is obtained from $C_{2k}$ by attaching a pendant vertex to one of its vertices. As observed in~\cite{KNW2011}, the graph $C_{2r}^{*}$ is an $r$-ASC graph. First we deal with the cases when $n$ is small.

 The cases $n=1$ and $n=2$ are covered by Theorem~\ref{thm-complete}. We have already observed that a smallest $3$-ASC graph is of order $7$. Since each of the paths $P_3$, $P_4$, $P_5$, and $P_6$ is an induced subgraph of $C_6^*$, it follows that $\theta_3(P_3)=4$, $\theta_3(P_4)=3$, $\theta_3(P_5)=2$, and $\theta_3(P_6)=1$. For $n=7$ and $n=8$,  consider the graphs $H_1$ and $H_2$ from Fig.~\ref{ADD}. By a straightforward checking we find that both $H_1$ and $H_2$ are $3$-ASC graphs. Therefore $\theta_3(P_7)=\theta_3(P_8)=1$.

\begin{figure}[ht!]
\begin{center}
\begin{tikzpicture}[scale=1,style=thick]
\def\vr{3pt}

\draw (-7,0) -- (-6.2,0) -- (-5.4,0) -- (-4.6,0) -- (-3.8,0) -- (-3,0) -- (-2.2,0);
\draw (0.6,0) -- (1.4,0) --  (2.2,0) -- (3,0) -- (3.8,0) -- (4.6,0) -- (5.4,0) -- (6.2,0);
\draw (-7,0) -- (-5,1) -- (-6.2,0);
\draw (-5,1) -- (-5.4,0);
\draw (-2.2,0) -- (-5,1);
\draw (0.6,0) -- (3,1) -- (2.2,0);
\draw (1.4,0) -- (3,1) -- (5.4,0);

\draw (6.2,0) -- (3,1);

\draw (-7,0)  [fill=white] circle (\vr);
\draw (-6.2,0)  [fill=black] circle (\vr);
\draw (-5.4,0)  [fill=black] circle (\vr);
\draw (-4.6,0)  [fill=black] circle (\vr);
\draw (-3.8,0)  [fill=white] circle (\vr);
\draw (-3,0)  [fill=black] circle (\vr);
\draw (-2.2,0)  [fill=black] circle (\vr);

\draw (0.6,0)  [fill=white] circle (\vr);
\draw (1.4,0)  [fill=black] circle (\vr);
\draw (2.2,0)  [fill=black] circle (\vr);
\draw (3,0)  [fill=black] circle (\vr);
\draw (3.8,0)  [fill=white] circle (\vr);
\draw (4.6,0)  [fill=black] circle (\vr);
\draw (5.4,0)  [fill=black] circle (\vr);
\draw (6.2,0)  [fill=black] circle (\vr);

\draw (-5,1)  [fill=black] circle (\vr);

\draw (3,1)  [fill=black] circle (\vr);

\draw (-5,1.4) node {$x$};

\draw (3,1.4) node {$x$};

\draw (-7,-0.3) node {$v_1$};
\draw (-6.2,-0.3) node {$v_2$};
\draw (-5.4,-0.3) node {$v_3$};
\draw (-4.6,-0.3) node {$v_4$};
\draw (-3.8,-0.3) node {$v_5$};
\draw (-3,-0.3) node {$v_6$};
\draw (-2.2,-0.3) node {$v_7$};

\draw (0.6,-0.3) node {$v_1$};
\draw (1.4,-0.3) node {$v_2$};
\draw (2.2,-0.3) node {$v_3$};
\draw (3,-0.3) node {$v_4$};
\draw (3.8,-0.3) node {$v_5$};
\draw (4.6,-0.3) node {$v_6$};
\draw (5.4,-0.3) node {$v_7$};
\draw (6.2,-0.3) node {$v_8$};

\draw (-4.6,-0.8) node {$H_1$};
\draw (3.8,-0.8) node {$H_2$};

\end{tikzpicture}
\end{center}
\caption{ The graphs $H_1$ and $H_2$}
\label{ADD}
\end{figure}
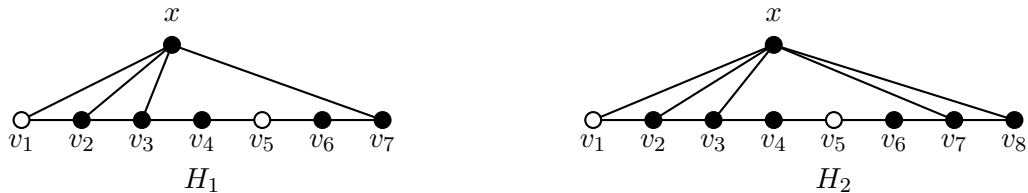

In the rest we may thus assume that $n\ge 9$ and first prove:

\begin{lemma}
\label{L2.4}
 If $n\geq 9$, then $\theta_3(P_n)\le2$.
\end{lemma}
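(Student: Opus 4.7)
The plan is to exhibit, for each $n\ge 9$, an explicit $3$-ASC graph $H$ on $V(P_n)\cup\{x,y\}$ that contains $P_n$ as an induced subgraph; this immediately yields $\theta_3(P_n)\le 2$. Since I only add edges incident with $x$ and $y$, the path $P_n$ remains induced automatically, so the task reduces to choosing the neighbourhoods $N_H(x)$, $N_H(y)$ and then verifying the eccentricity structure.

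My proposed construction attaches $x$ as a ``left shortcut'' and $y$ as a ``right shortcut'' whose neighbourhoods alternate along $P_n$. For $n=2k$ even I would take
$$
N_H(x)=\{v_1,v_2,v_4,\ldots,v_{2k-2}\},\quad N_H(y)=\{v_3,v_5,\ldots,v_{2k-1},v_{2k}\},
$$
and for $n=2k+1$ odd I would take
$$
N_H(x)=\{v_1,v_2,v_4,\ldots,v_{2k-2},v_{2k-1}\},\quad N_H(y)=\{v_3,v_5,\ldots,v_{2k-3},v_{2k},v_{2k+1}\},
$$
so that the alternation carries cleanly through the parity change. In either case $N_H(x)\cap N_H(y)=\emptyset$, hence $d_H(x,y)=3$, while the walk $v_1v_2v_3yv_n$ already witnesses $d_H(v_1,v_n)\le 4$.

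The proof that $H$ is $3$-ASC then reduces to four BFS claims: (i) $d_H(v_1,v_n)=4$, the lower bound following because any $(v_1,v_n)$-path of length at most $3$ would require a common neighbour of $x$ and $y$ in $V(P_n)$, which does not exist; (ii) ${\rm ecc}_H(v_i)=3$ for every $i\in\{2,\ldots,n-1\}$, since each such $v_i$ has a path-neighbour in $N_H(x)\cup N_H(y)$ and therefore reaches both $v_1$ and $v_n$ in at most $3$ steps; (iii) ${\rm ecc}_H(x)={\rm ecc}_H(y)=3$, with $v_n$ and $v_1$ respectively realising the eccentricities; and (iv) no vertex of $H$ has eccentricity less than $3$, so ${\rm rad}(H)=3$. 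Together these conditions identify $\{v_1,v_n\}$ as the unique diametrical pair of $H$.

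The main obstacle is to keep the construction in the narrow sweet spot between two failure modes: enriching $N_H(x)$ or $N_H(y)$ with one extra edge can collapse the eccentricity of $x$, $y$, or an over-connected path vertex to $2$, while removing a shortcut edge strands some interior vertex at distance $4$ from an endpoint and thereby creates a third diametrical vertex. The alternating-parity choice resolves both tensions simultaneously, but the BFS verifications at the boundary indices $v_2,v_3,v_{n-2},v_{n-1}$ and for the shortcut vertices themselves must be carried out with a small case split by the parity of $n$.
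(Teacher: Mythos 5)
Your proposal is correct and follows the same overall strategy as the paper: exhibit an explicit graph on $V(P_n)\cup\{x,y\}$ in which only edges incident with $x$ and $y$ are added, and verify it is $3$-ASC. The concrete constructions differ, though. The paper handles $n=9$ by an ad hoc gadget and, for $n\ge 10$, reuses a fixed configuration on $v_1,\ldots,v_9$ while joining every vertex $v_{10},\ldots,v_n$ to \emph{both} $x$ and $y$; its diametrical pair is $\{v_5,v_9\}$, independent of $n$. Your construction is uniform over all $n\ge 9$ (up to a parity split), keeps $N_H(x)$ and $N_H(y)$ disjoint so that $d_H(x,y)=3$, and makes $\{v_1,v_n\}$ the diametrical pair. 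I checked your neighbourhoods for $n=9,10,11$ and the general pattern: since $N_H(x)\cup N_H(y)=V(P_n)$ and every interior $v_i$ is adjacent to one of $x,y$ and has a path-neighbour adjacent to the other, every vertex other than $v_1,v_n$ has eccentricity $3$, while $d_H(v_1,v_n)=4$; so the construction works. Your approach arguably yields a cleaner uniform verification, at the cost of a parity case split that the paper avoids.

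One small inaccuracy in your sketch of claim (i): a $(v_1,v_n)$-path of length $3$ need not pass through a common neighbour of $x$ and $y$; it has the form $v_1\,a\,b\,v_n$ with $a\in\{v_2,x\}$ and $b\in\{v_{n-1},y\}$, and one must rule out the four edges $v_2v_{n-1}$, $v_2y$, $xv_{n-1}$, $xy$ separately (the first by $n\ge 9$, the others because $N_H(x)$ and $N_H(y)$ are disjoint and $xy\notin E(H)$). This is easily repaired and does not affect the correctness of the construction.
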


\begin{proof}
For $n=9$ the assertion follows from the embedding presented in Fig.~\ref{Q2}.

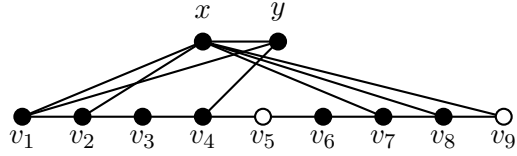
\begin{figure}[ht!]
\begin{center}
\begin{tikzpicture}[scale=1,style=thick]
\def\vr{3pt}

\draw (0.6,0) -- (1.4,0) --  (2.2,0) -- (3,0) -- (3.8,0) -- (4.6,0) -- (5.4,0) -- (6.2,0) -- (7,0);

\draw (0.6,0) -- (3,1) -- (4,1) -- (3,0);
\draw (1.4,0) -- (3,1) -- (5.4,0);
\draw (0.6,0) -- (4,1);
\draw (6.2,0) -- (3,1) -- (7,0);

\draw (0.6,0)  [fill=black] circle (\vr);
\draw (1.4,0)  [fill=black] circle (\vr);
\draw (2.2,0)  [fill=black] circle (\vr);
\draw (3,0)  [fill=black] circle (\vr);
\draw (3.8,0)  [fill=white] circle (\vr);
\draw (4.6,0)  [fill=black] circle (\vr);
\draw (5.4,0)  [fill=black] circle (\vr);
\draw (6.2,0)  [fill=black] circle (\vr);
\draw (7,0)  [fill=white] circle (\vr);

\draw (3,1)  [fill=black] circle (\vr);
\draw (4,1)  [fill=black] circle (\vr);

\draw (3,1.4) node {$x$};
\draw (4,1.4) node {$y$};

\draw (0.6,-0.3) node {$v_1$};
\draw (1.4,-0.3) node {$v_2$};
\draw (2.2,-0.3) node {$v_3$};
\draw (3,-0.3) node {$v_4$};
\draw (3.8,-0.3) node {$v_5$};
\draw (4.6,-0.3) node {$v_6$};
\draw (5.4,-0.3) node {$v_7$};
\draw (6.2,-0.3) node {$v_8$};
\draw (7,-0.3) node {$v_9$};
\end{tikzpicture}
\end{center}
\caption{ Embedding of $P_9$ into a $3$-ASC graph of order $11$}
\label{Q2}
\end{figure}

For $n\geq 10$ we construct a graph  $G_n$ schematically shown in Fig.~\ref{Q3}, where all the vertices $v_{10},\ldots, v_{n}$ are adjacent to both $x$ and $y$.

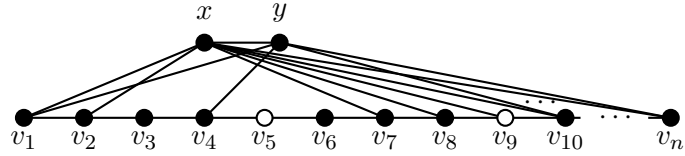
\begin{figure}[ht!]
\begin{center}
\begin{tikzpicture}[scale=1,style=thick]
\def\vr{3pt}

\draw (0.6,0) -- (1.4,0) --  (2.2,0) -- (3,0) -- (3.8,0) -- (4.6,0) -- (5.4,0) -- (6.2,0) -- (7,0) -- (7.8,0) -- (8,0);
\draw (8.9,0) -- (9.2,0);
\draw (7.8,0) -- (3,1) -- (9.2,0);
\draw (7.8,0) -- (4,1) -- (9.2,0);
\draw (0.6,0) -- (3,1) -- (4,1) -- (3,0);
\draw (1.4,0) -- (3,1) -- (5.4,0);
\draw (0.6,0) -- (4,1);
\draw (6.2,0) -- (3,1) -- (7,0);

\draw (0.6,0)  [fill=black] circle (\vr);
\draw (1.4,0)  [fill=black] circle (\vr);
\draw (2.2,0)  [fill=black] circle (\vr);
\draw (3,0)  [fill=black] circle (\vr);
\draw (3.8,0)  [fill=white] circle (\vr);
\draw (4.6,0)  [fill=black] circle (\vr);
\draw (5.4,0)  [fill=black] circle (\vr);
\draw (6.2,0)  [fill=black] circle (\vr);
\draw (7,0)  [fill=white] circle (\vr);
\draw (7.8,0)  [fill=black] circle (\vr);
\draw (9.2,0)  [fill=black] circle (\vr);

\draw (3,1)  [fill=black] circle (\vr);
\draw (4,1)  [fill=black] circle (\vr);

\draw (3,1.4) node {$x$};
\draw (4,1.4) node {$y$};

\draw (0.6,-0.3) node {$v_1$};
\draw (1.4,-0.3) node {$v_2$};
\draw (2.2,-0.3) node {$v_3$};
\draw (3,-0.3) node {$v_4$};
\draw (3.8,-0.3) node {$v_5$};
\draw (4.6,-0.3) node {$v_6$};
\draw (5.4,-0.3) node {$v_7$};
\draw (6.2,-0.3) node {$v_8$};
\draw (7,-0.3) node {$v_9$};
\draw (7.8,-0.3) node {$v_{10}$};
\draw (9.2,-0.3) node {$v_{n}$};
\draw (8.5,0) node {$\cdots$};
\draw (7.5,0.2) node {$\cdots$};
\end{tikzpicture}
\end{center}
\caption{Graphs $G_n$}
\label{Q3}
\end{figure}

It is straightforward to check that $G_n$ is  a $3$-ASC embedding graph of $P_n$.
\end{proof}

\begin{lemma}
\label{L2.5}
If $n\geq 9$, then $\theta_3(P_n)\geq 2$.
\end{lemma}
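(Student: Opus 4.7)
Proof plan:

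We argue by contradiction. Assume that for some $n\ge 9$ there is a graph $H = P_n + x$ (obtained by adding a single vertex $x$ to $P_n$) that is $3$-ASC, so $\mathrm{diam}(H)=4$, $\mathrm{rad}(H)=3$, and exactly two vertices of $H$ have eccentricity $4$ (a ``diametrical pair'', which are necessarily at distance $4$ from each other). Set $S := N_H(x)\cap V(P_n)$, write $S=\{v_{i_1},\ldots,v_{i_k}\}$ with $i_1<\cdots<i_k$, and put $d(j):=\min_p|j-i_p|$. A shortest $v_a$--$v_b$ path either stays in $P_n$ or uses the edge $xv_{i_p}$ exactly twice, so
\[
d_H(v_a,v_b)=\min\bigl(|a-b|,\,2+d(a)+d(b)\bigr),\qquad d_H(x,v_j)=1+d(j).
\]

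I will first carry out some reductions. (i) $k\ge 2$: otherwise $x$ is pendant and $\mathrm{ecc}_H(x)\ge\lceil (n+1)/2\rceil\ge 5$. (ii) From $d_H(v_1,v_n)\le 4$ we read off $i_k-i_1\ge n-3$, hence $i_1\le 3$ and $i_k\ge n-2$. (iii) $i_1\ne 3$: if $i_1=3$, then $d(1)=2$ and the inequality $d_H(v_1,v_j)=\min(j-1,\,4+d(j))\le 4$ for $j\ge 5$ forces $d(j)=0$, i.e.\ $\{v_5,\ldots,v_n\}\subseteq S$; a direct computation then gives $\mathrm{ecc}_H(v_3)=2$, contradicting $\mathrm{rad}(H)=3$. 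By symmetry $i_k\ne n-2$, so $i_1\in\{1,2\}$ and $i_k\in\{n-1,n\}$.

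Now fix $n\ge 10$. For any $j\in[1,n]$ there is a $b\in[1,n]$ with $|b-j|\ge 5$; applying $d_H(v_j,v_b)\le 4$ then yields $d(j)+d(b)\le 2$, in particular $d(j)\le 2$. Together with $\mathrm{ecc}_H(x)=1+\max_j d(j)\ge \mathrm{rad}(H)=3$, this forces $\max_j d(j)=2$, so we may choose $j^*$ with $d(j^*)=2$. Any $v_b$ satisfying $d_H(v_{j^*},v_b)=4$ has $\mathrm{ecc}_H(v_b)\ge 4$; since only two vertices of $H$ may have eccentricity $4$ and $v_{j^*}$ is one of them, the set
\[
B:=\{b\in[1,n]:d_H(v_{j^*},v_b)=4\}=(\{j^*-4,j^*+4\}\cap[1,n])\cup\{b\in S:|b-j^*|\ge 5\}
\]
must satisfy $|B|=1$. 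This is the crux of the argument.

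The main obstacle is checking that $|B|\ge 2$ always occurs. If $5\le j^*\le n-4$ then both $j^*\pm 4$ lie in $[1,n]$, so $|B|\ge 2$ immediately. If $j^*\le 4$ (the case $j^*\ge n-3$ being symmetric), then $j^*+4\in B$ and every $b\in S$ with $b\ge j^*+5$ lies in $B$; since $i_k\ge n-2$, this puts $v_{i_k}$ in $B$ as a second element unless $i_k\le j^*+4$, which combined with $i_k\ge n-2$ gives $j^*\ge n-6$. Together with $j^*\le 4$ this is impossible for $n\ge 11$, so the argument is complete for all $n\ge 11$. The remaining values $n\in\{9,10\}$, together with the case $n=9$ (where the bound $d(j)\le 2$ need not hold at $j=5$), are handled by direct inspection: the reductions (i)--(iii) leave only the possibilities $(i_1,i_k)\in\{(1,n-1),(1,n),(2,n-1),(2,n)\}$, and in each case one verifies that either some pair $v_a,v_b$ has $d_H(v_a,v_b)>4$, or $\mathrm{ecc}_H(x)<3$, or a third vertex of eccentricity $4$ emerges, yielding the required contradiction.
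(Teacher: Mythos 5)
Your framework is sound and genuinely different from the paper's: the paper argues by locating the two diametrical vertices (first ruling out that $x$ is one of them, then splitting on whether $x$ is adjacent to one of $v_i,v_j$ or to neither), whereas you parametrize everything by $S=N_H(x)\cap V(P_n)$ and an explicit distance formula, which makes the reductions and the $|B|=1$ counting quite clean. The formula, reductions (i)--(ii), and the main argument for $n\ge 11$ check out. (Two small remarks: in (iii) the constraint $d_H(v_1,v_j)\le 4$ only forces $d(j)=0$ for $j\ge 6$, not $j\ge 5$, but ${\rm ecc}_H(v_3)=2$ still follows; and $n=10$ is not actually a remaining case, since your own reduction (iii) gives $i_k\ge n-1$, so $i_k\le j^*+4$ together with $j^*\le 4$ would force $n\le 9$.)

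The genuine gap is $n=9$, which you defer to ``direct inspection'' over the four pairs $(i_1,i_k)$. That is not a finite check: for fixed $(i_1,i_k)$ the set $S$ can be any of exponentially many subsets, and the verification depends on $S$, not just on its extremes. Two concrete pieces are missing. First, for $n=9$ you must rule out $d(5)=3$, since your bound $d(j)\le 2$ fails at $j=5$; this needs a separate argument, e.g.\ $d(5)=3$ forces $S\subseteq\{1,2,8,9\}$, whence $x$, $v_1$ and $v_9$ are all at distance $4$ from $v_5$ --- too many non-central vertices (and $d(5)\ge 4$ gives $d_H(x,v_5)\ge 5$). Second, once $d(j^*)=2$ with $j^*\le 4$, the configuration $i_k=j^*+4$ (i.e.\ $j^*=4$, $i_k=8$, $v_9\notin S$) survives your counting with $|B|=1$ and must be killed separately, e.g.\ by $d_H(v_4,v_9)=\min(5,\,4+d(9))=5>{\rm diam}(H)$. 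In fact this last observation --- if $j^*\le 4$ then $d_H(v_{j^*},v_n)\le 4$ forces $v_n\in S$, hence $i_k=n\ge j^*+5$ and $|B|\ge 2$ --- closes the $j^*\le 4$ case uniformly for all $n\ge 9$ and would let you drop the $n\ge 11$ restriction entirely. As written, though, the boundary case $n=9$ of the lemma is not proved.
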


\begin{proof} Since paths $P_n$ are not $3$-ASC graphs we have $\theta_3(P_n)> 0$ for  all $n\geq 9$. In the following we prove that $\theta_3(P_n)>1$  for $n\ge 9$. Assume on the contrary that there exists a $3$-ASC graph $G$ with $V(G)=V(P_n)\cup\{x\}$ that contains $P_n$ as an induced subgraph.

First assume that $x$ is a diametrical vertex in $G$  and let $v_i$ be its diametrical pair,  so that $d_G(x,v_i)=4$. If $i=1$, then  clearly $xv_1, xv_2, xv_3\notin E(G)$ and $xv_4\in E(G)$. Since $P_n$ is an induced subgraph of $G$, we conclude that $d_G(v_1,v_j)\geq 4$ for $j\geq 5$, which contradicts the fact that $G$ is a $3$-ASC graph. Similarly we  get a contradiction if $i = 2$. By symmetry, $i\neq n-1$ and $i\neq n$. Finally, if $2<i< n-2$, then $x$ is not adjacent to any vertex of $v_{i-2}$, $v_{i-1}$, $v_{i+1}$ and $v_{i+2}$. Thus $d_G(v_{i-2},v_{i+2})=4$, another contradiction.

It follows that  the two diametrical vertices of $G$ are on $P_n$, say $v_i$ and $v_j$, where $i<j$. Then $d_G(v_i,v_j)=4$. Clearly, $xv_i$ and $xv_j$ can not belong to $E(G)$ simultaneously. We distinguish the following two cases.

\medskip\noindent
\textbf{Case 1.}  Either $xv_i\in E(G)$ or $xv_j\in E(G)$.
\\
We may without loss of generality assume that $xv_i\in E(G)$.  If $i>1$, then $d_G(v_{i-1}, v_j)\ge 4$, which implies that $G$ has as least three diametrical vertices. Hence $i=1$ must hold.  Since $xv_1\in E(G)$ and $d_G(v_1,v_j)=4$, neither of $v_{j}$ and $v_{j-1}$ is adjacent to $x$ in $G$. If $j<n$, then the same conclusion holds also for $v_{j+1}$. In any case, $d_G(v_{j-4},v_j) = 4$. This is only possible if $j-4 = i = 1$, that is, if $j=5$. As already argued above, $xv_4, xv_5, xv_6\notin E(G)$, for otherwise $d_G(v_1,v_5)\le 3$ would hold. But then it follows that $d_G(v_9, v_5) = 4$, so we would again have at least three diametrical vertices. Case 1 hence cannot happen.

\medskip\noindent
\textbf{Case 2.} Neither of $v_i$ and $v_j$ is adjacent to $x$ in $G$. \\
 Suppose first that $i\ge 2$ and $j\le n-1$. Then $x$ must be adjacent to each vertex from $V_1=\{v_{k}:\ 1\leq k<i\}$  because $d_G(v_k,v_j)\ge 4$ holds if $v_kx\notin E(G)$. Analogously, $x$ must be adjacent to each vertex from  $V_2=\{v_t:\ j<t\leq n\}$.  Since $d_G(v_i,v_j)=4$ we clearly have that $j-i\geq 4$. If $j-i=4$, then $xv_{i+2}\notin E(G)$,  for otherwise ${\rm ecc}_G(x)=2$. Considering that $d_G(v_{i+3},v_k)\leq 3$ for any vertex $v_k\in V_1$, we have $xv_{i+3}\in E(G)$. By symmetry, we get $xv_{i+1}\in E(G)$. Hence  ${\rm ecc}_G(x)=2$ holds again, a contradiction. If $j-i>4$, then from ${\rm ecc}_G(x)=3$ we find out that there are three consecutive vertices $v_s,v_{s+1},v_{s+2}$ where $i\leq s<s+2\leq j$ such that each of them has degree $2$ in $G$. But now there must be a vertex $v_p\in V_1\cup V_2$ such that $d_G(v_{s+1},v_{p})=4$.

Let next $i=1$ and $j<n$. Then $x$ must be adjacent to all vertices $v_t$, $t>j$, because otherwise $d_G(v_t,v_1)\ge 4$ would hold for such a vertex $v_t$. Assume that $xv_2\notin E(G)$. Then $j=5$ because otherwise $d_G(v_2,v_{j}) > 3$.  But then $d_G(v_1,v_{6}) > 3$, a contradiction. Hence necessary $xv_2\in E(G)$.  As above we now find consecutive vertices $v_s,v_{s+1},v_{s+2}$ such that none of them is adjacent to $x$ in $G$ where $3\leq s<s+2\leq j$, for otherwise ${\rm ecc}_G(x)=2$. But then $d_G(v_{s+1},v_{1}) > 3$ for $s>3$, or $d_G(v_{s+1},v_p)>3$ where $p>j$ for $s=3$ from $n\geq 9$, another contradiction.

The case when $i> 1$ and $j=n$ is symmetric to the last case.  Hence we are left with the situation when $i=1$ and $j=n$. If $xv_2\notin E(G)$, then we have $d_G(v_1,v_{n-1}) > 3$. So $xv_2\in E(G)$ and by a parallel argument also $xv_{n-1}\in E(G)$. Now, as above, we have a triple of vertices $v_s,v_{s+1},v_{s+2}$ none of which is adjacent to $x$ in $G$. But then $d_G(v_{s+1},v_1) > 3$ or $d_G(v_{s+1},v_n) > 3$, a final contradiction.\end{proof}

 Combining Lemmas~\ref{L2.4}, \ref{L2.5} with the arguments given before we have the following result.

\begin{theorem}\label{thm:paths}  The $3$-ASC index of paths is:
$$\theta_3(P_n)=\left\{
\begin{array}{ll}
7-n, & 1\le n\le 5\,; \\
1,& 6\le n \le 8\,; \\
2,& n\geq 9\,.
\end{array}
\right.$$
\end{theorem}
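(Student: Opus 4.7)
The plan is to establish the formula by a case analysis on $n$, assembling results already proved in the paper into a single argument.

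First I would handle the tiny cases $n\in\{1,2\}$ directly via Theorem~\ref{thm-complete}: since $P_1\cong K_1$ and $P_2\cong K_2$, that theorem gives $\theta_3(P_1)=6=7-1$ and $\theta_3(P_2)=5=7-2$, which matches the formula.

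For $3\le n\le 6$ I would combine two ingredients. The lower bound $\theta_3(P_n)\ge 7-n$ comes from the observation already recorded in the excerpt that the smallest $3$-ASC graph has order $7$: any $3$-ASC embedding of $P_n$ therefore requires adding at least $7-n$ vertices. For the matching upper bound, the graph $C_6^*$ is a $3$-ASC graph on $7$ vertices (noted at the start of this section) and it contains each of $P_3,P_4,P_5,P_6$ as an induced subgraph, yielding $\theta_3(P_n)\le 7-n$.

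For $n\in\{7,8\}$ I would exhibit the explicit graphs $H_1,H_2$ from Fig.~\ref{ADD} as $3$-ASC embeddings of $P_7,P_8$ respectively, which gives $\theta_3(P_n)\le 1$. The matching lower bound is the elementary remark that neither $P_7$ (whose unique central vertex is $v_4$) nor $P_8$ (whose radius is $4$, not $3$) is itself a $3$-ASC graph, so $\theta_3(P_n)\ge 1$; hence $\theta_3(P_7)=\theta_3(P_8)=1$.

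Finally, for $n\ge 9$ the claim $\theta_3(P_n)=2$ is an immediate consequence of Lemmas~\ref{L2.4} and~\ref{L2.5}, which respectively supply the upper and lower bounds of $2$. The main substantive obstacle in the whole program is Lemma~\ref{L2.5}, which already carries out the delicate case analysis on how a single added vertex $x$ could attach to $P_n$ (whether $x$ itself is a diametrical vertex of the embedding, which endpoint-configurations of the would-be diametrical pair can arise, and the forced existence of three consecutive degree-$2$ vertices when $j-i>4$); once this obstruction is in hand, the theorem itself is just a routine compilation of the four cases above.
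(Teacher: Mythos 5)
Your proposal is correct and follows essentially the same route as the paper: small cases via Theorem~\ref{thm-complete} and the order-$7$ minimality of a $3$-ASC graph together with $C_6^*$, the explicit graphs $H_1,H_2$ for $n\in\{7,8\}$, and Lemmas~\ref{L2.4} and~\ref{L2.5} for $n\ge 9$. The only addition you make is to spell out the easy lower bound $\theta_3(P_n)\ge 1$ for $n\in\{7,8\}$, which the paper leaves implicit.
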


\subsection{Cycles}

Next we determine the $3$-ASC index of cycles. The triangle $C_3$ is a complete graph, so $\theta_3(C_3)=5$ by Theorem~\ref{thm-complete}.  We have already observed that $\theta_3(C_4)=4$ and $\theta_3(C_5)=3$. Recall that  $C_6^*$ (the graph obtained by attaching a pendant vertex to one of the vertices of $C_6$) is a $3$-ASC graph, hence $\theta_3(C_6)=1$. Recall further that $C_{2r+1}'$ is the graph which consists of a cycle $C_{2r+1}$ and a vertex adjacent to exactly two consecutive vertices in $C_{2r+1}$ and that $C_{2r+1}'$ is an $r$-ASC graph. Thus $C_7'$ is $3$-ASC graph and $\theta_3(C_7)=1$ holds accordingly.  Let in addition $C_8''$  be the graph obtained by joining five consecutive vertices of $C_8$ with a new vertex. It can be easily checked that $C_8''$ is a $3$-ASC graph  and thus $\theta_3(C_8)=1$. The  result for all cycles reads as follows.

\begin{theorem}\label{thm:cycles}  The $3$-ASC index of cycles is:
$$\theta_3(C_n)=\left\{
\begin{array}{ll}
8-n, & 3\le n \le 5\,; \\
1,& 6\le n \le 8\,; \\
2,& n\geq 9\,.
\end{array}
\right.$$
\end{theorem}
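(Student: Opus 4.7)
The cases $3\le n\le 8$ are all settled in the discussion immediately preceding the theorem (using Theorem~\ref{thm-complete} for $C_3$, the embeddings $C_6^\ast$, $C_7'$, and $C_8''$ for $n=6,7,8$, and the explicit computations $\theta_3(C_4)=4$, $\theta_3(C_5)=3$), so the remaining content is to prove $\theta_3(C_n)=2$ for every $n\ge 9$. This requires both the upper bound $\theta_3(C_n)\le 2$ and the lower bound $\theta_3(C_n)\ge 2$.

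For the upper bound $\theta_3(C_n)\le 2$, I would exhibit a concrete $3$-ASC graph $G_n$ of order $n+2$ in which $C_n$ is an induced subgraph, imitating the construction of Lemma~\ref{L2.4} (Fig.~\ref{Q3}). Label $V(C_n)=\{v_1,\ldots,v_n\}$ with edges $v_iv_{i+1}$ taken mod $n$, and adjoin two new vertices $x,y$ with $xy\in E(G_n)$. The idea is to attach $x$ to one arc of the cycle (say $v_1,v_2,v_4,v_6,v_8$ plus $v_{10},\ldots,v_n$ when $n\ge 10$) and $y$ to a complementary set (say $v_1,v_3,v_5,v_7$ plus the same tail), tuned so that exactly two cycle vertices (analogous to the white vertices $v_5,v_9$ in Fig.~\ref{Q3}) end up at eccentricity $4$ and every other vertex, including $x$ and $y$, has eccentricity $3$. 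The verification is a finite distance check; the only structural adjustment from the path case is that the extra edge $v_nv_1$ has to be accounted for, which in practice means absorbing the vertices $v_{10},\ldots,v_n$ into the common neighborhood of $x$ and $y$ so that the closing arc introduces no new distance-$4$ witnesses.

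For the lower bound $\theta_3(C_n)\ge 2$, suppose toward contradiction that $G$ is a $3$-ASC graph on $V(C_n)\cup\{x\}$ in which $C_n$ is induced, and mimic the proof of Lemma~\ref{L2.5}, adapted to cyclic indexing. First rule out $x$ being one of the two diametrical vertices: if $x$ is paired with $v_i$ at $d_G(x,v_i)=4$, then $x$ has no neighbor within $C_n$-distance $2$ of $v_i$, whence the vertices $v_{i\pm 3}$ and $v_{i\pm 4}$ must reach $v_i$ only around the cycle (using the hypothesis $n\ge 9$, both arcs of length $\ge 4$ are available), producing a third vertex of eccentricity $4$. Otherwise the two diametrical vertices are cycle vertices $v_i,v_j$ with $d_G(v_i,v_j)=4$; split according to whether $x$ is adjacent to $v_i$ (or $v_j$) or to neither. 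In the adjacent case one pushes the argument of Case~1 of Lemma~\ref{L2.5} to force a second diametrical pair on the cycle; in the non-adjacent case one shows, exactly as in Case~2 there, that $x$ must be joined to sufficiently many vertices of both arcs to keep everyone's eccentricity at most $3$, and either this forces $\mathrm{ecc}_G(x)=2$ or it leaves a block of three consecutive cycle vertices none of which is adjacent to $x$, giving a pair at distance $>3$.

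The main obstacle will be the lower-bound case analysis, because the cycle offers two arcs joining any two vertices, so the distance bookkeeping has to track both simultaneously rather than a single linear order as for paths. The key leverage is the hypothesis $n\ge 9$: it guarantees that the two arcs between $v_i$ and $v_j$ (in the second case) jointly have length at least $7$, which is precisely the slack needed to locate the offending triple of consecutive non-neighbors of $x$ on some arc, mirroring the final step of the path argument.
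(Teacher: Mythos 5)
Your proposal is correct and follows essentially the same route as the paper: the same two-added-vertex construction (the path gadget of Fig.~\ref{Q3} adapted as in Fig.~\ref{Q5}, with $v_{10},\dots,v_n$ joined to both $x$ and $y$) for the upper bound, and for the lower bound the same top-level dichotomy --- your split on whether $x$ is diametrical is exactly the paper's split on ${\rm ecc}_G(x)=4$ versus ${\rm ecc}_G(x)=3$ --- with the same mechanism of locating three consecutive cycle vertices not adjacent to $x$ and using $n\ge 9$ to manufacture a third vertex of eccentricity $4$. The only cosmetic difference is that your further sub-cases on the adjacency of $x$ to the diametrical pair are not really needed: once ${\rm ecc}_G(x)=3$ yields a vertex $v_k$ with $v_{k-1},v_k,v_{k+1}\notin N_G(x)$, so that every vertex within $G$-distance $3$ of $v_k$ lies in $\{v_{k-3},\dots,v_{k+3}\}\cup\{x\}$, the $n-7\ge 2$ cycle vertices at cyclic distance at least $4$ from $v_k$ already force at least three diametrical vertices.
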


\begin{proof}
The result for $n\le 8$ has been established above, hence in the following we will assume that $n\geq 9$. Let $V(C_n)=\{v_1,\ldots,v_n\}$, $E(C_n)=\{v_iv_{i+1}|i=1,\ldots,n-1\}\cup \{v_nv_1\}$, and for a positive integer $k$, $1\leq k\leq n$, we set $|k|_{n}=\min\{k,n-k\}$.

We first prove that $\theta_3(C_n)\leq 2$. For $n=9$ consider the graph $G$ shown in Fig.~\ref{Q4}. It can be routinely checked that $G$ is a $3$-ASC graph ( recall that two vertices filled white are diametrical vertices as mentioned before).

\begin{figure}[ht!]
\begin{center}
\begin{tikzpicture}[scale=1,style=thick]
\def\vr{3pt}

\draw (0.6,-0.4) -- (-0.1,-0.4) --  (-0.3,0.4) -- (-0.7,1.2) -- (0.2,2) -- (1.1,2) -- (2.1,1.2) -- (1.6,0.4) -- (1.3,-0.4) -- (0.6,-0.4);
\draw (0.6,1.1) -- (0.8,0.6);
\draw (-0.3,0.4) -- (0.6,1.1) -- (-0.7,1.2);
\draw (2.1,1.2) -- (0.6,1.1) -- (1.1,2);
\draw (0.2,2) -- (0.6,1.1);
\draw (1.3,-0.4) -- (0.8,0.6) -- (1.1,2);

\draw (0.6,-0.4)  [fill=white] circle (\vr);
\draw (1.3,-0.4)  [fill=black] circle (\vr);
\draw (-0.1,-0.4)  [fill=black] circle (\vr);
\draw (1.6,0.4)  [fill=black] circle (\vr);
\draw (-0.3,0.4)  [fill=black] circle (\vr);
\draw (2.1,1.2)  [fill=black] circle (\vr);
\draw (-0.7,1.2)  [fill=black] circle (\vr);
\draw (1.1,2)  [fill=black] circle (\vr);
\draw (0.2,2)  [fill=white] circle (\vr);
\draw (0.6,1.1)  [fill=black] circle (\vr);
\draw (0.8,0.6)  [fill=black] circle (\vr);

\end{tikzpicture}
\end{center}
\caption{ The graph $G$}
\label{Q4}
\end{figure}
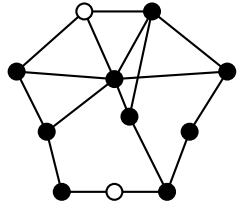

When $n\geq 10$, we construct the graph  $H_n$ as shown in Fig.~\ref{Q5}.  In $H_n$ both $x$ and $y$ are adjacent to any vertex from $\{v_k:\ 10\leq k\leq n\}$.  The graph $H_n$ is a $3$-ASC embedding graph of $C_n$, completing the proof of upper bound on $\theta_3(C_n)$.

\begin{figure}[ht!]
\begin{center}
\begin{tikzpicture}[scale=1,style=thick]
\def\vr{3pt}

\draw (0.6,-0.4) -- (-0.3,-0.4) --  (-0.4,0.4) -- (-0.7,1.2) -- (0,2.1);
\draw (1.4,2.1) -- (2,1.6) -- (2.2,1) -- (1.9,0.4) -- (1.7,0) -- (1.5,-0.4) -- (0.6,-0.4);
\draw (0.5,1) -- (1.2,0.5);
\draw (-0.4,0.4) -- (0.5,1) -- (-0.7,1.2);
\draw (2.2,1) -- (0.5,1) -- (1.4,2.1);
\draw (0,2.1) -- (0.5,1) -- (2,1.6);
\draw (1.7,0) -- (1.2,0.5) -- (1.4,2.1);
\draw (-0.3,-0.4) -- (0.5,1) -- (1.4,2.1);
\draw (2,1.6) -- (1.2,0.5) -- (0,2.1);
\draw (0,2.1) -- (0.2,2.1);
\draw (1.4,2.1) -- (1.2,2.1);

\draw (0.6,-0.4)  [fill=black] circle (\vr);
\draw (1.5,-0.4)  [fill=white] circle (\vr);
\draw (1.7,0)  [fill=black] circle (\vr);
\draw (-0.3,-0.4)  [fill=black] circle (\vr);
\draw (1.9,0.4)  [fill=black] circle (\vr);
\draw (-0.4,0.4)  [fill=black] circle (\vr);
\draw (2,1.6)  [fill=black] circle (\vr);
\draw (2.2,1)  [fill=black] circle (\vr);
\draw (-0.7,1.2)  [fill=white] circle (\vr);
\draw (1.4,2.1)  [fill=black] circle (\vr);
\draw (0,2.1)  [fill=black] circle (\vr);
\draw (0.5,1)  [fill=black] circle (\vr);
\draw (1.2,0.5)  [fill=black] circle (\vr);

\draw (0.5,0.7) node {$x$};
\draw (1.1,0.2) node {$y$};
\draw (0,2.4) node {$v_{10}$};
\draw (1.4,2.4) node {$v_n$};
\draw (0.7,2.1) node {$\cdots\cdots$};
\draw (0.7,-1) node {$H_n$};
\end{tikzpicture}
\end{center}
\caption{ The graph $H_n$}
\label{Q5}
\end{figure}
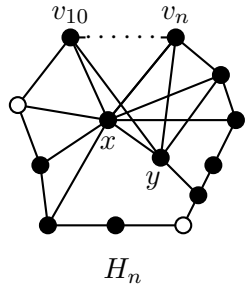

 To complete the proof we need to show that $\theta_3(C_n)\geq 2$. As $C_n$, $n\geq 9$, is not a $3$-ASC graph, we only need to prove that $\theta_3(C_n)> 1$.  Suppose on the contrary that a graph $G$ with vertex set $V(C_n)\cup \{x\}$ is a $3$-ASC embedding graph of $C_n$.

If ${\rm ecc}_G(x)=4$, then there exists an induced internal $(v_i,v_j)$-path $P$ (that is, a path whose internal vertices are of degree $2$) of length $6$ in $G$  such that $xv_i,xv_j\in E(G)$ and $d_G(x,v_{t})=4$ where $v_t\in V(P)$ with $|t-i|_n=|t-j|_n=3$. Thus we conclude that $d_G(v_{t},v_s)\geq 4$ for $v_s\in V(C_n)\setminus V(P)$. This is an apparent contradiction to the $3$-ASC property of $G$. If ${\rm ecc}_G(x)=3$, then,  considering a vertex being at distance $3$ from $x$, we infer that there is an induced internal $(v_i,v_j)$-path $P$ with $|j-i|_n=4$ or $5$ and $v_k\in V(P)$ such that  both $v_i$ and $v_j$ are adjacent to $x$ in $G$ and that neither of $v_i$ and $v_j$ is adjacent to $v_k$. Now we consider the eccentricity of $v_k$ in $G$.  If ${\rm ecc}_G(v_k)=3$, then any vertex from $V(C_n)\setminus V(P)$ is adjacent to $v_i$ or $v_j$. However, this is impossible when $n\geq 9$. Then we get  ${\rm ecc}_G(v_k)=4$. Without loss of generality, assume that $i<j$ and $v_i$ has a neighbor $v_{i-1}\in V(C_n)\setminus V(P)$ and $v_j$ has a neighbor $v_{j+1}\in V(C_n)\setminus V(P)$. Let $V_0=V(C_n)\setminus \left(V(P)\cup \{v_{i-1},v_{j+1}\}\right)$. Since $n\geq 9$, we have $|V_0|\geq 2$ for $|j-i|_n=4$ or $|V_0|\geq 1$ for $|j-i|_n=5$. Therefore any vertex in $V_0$ must have eccentricity at least $4$ in $G$. Moreover, when $|j-i|_n=5$, $v_{i-1}$ or $v_{j+1}$ has eccentricity $4$ in $G$. This contradicts the fact that $G$ is a $3$-ASC graph  and we are done.
\end{proof}

\subsection{Trees of order $n$ and diameter $n-2$}

Now we turn to $\theta_3(T)$ for trees $T$ of order $n$ and diameter $n-2$.  Any such tree can be obtained by attaching a pendant vertex to any non-pendant vertex of the path $P_{n-1}$.  Throughout this subsection let $V(T)=\{v_1,\ldots,v_n\}$ and $E(T)=\{v_iv_{i+1}:\ i=1,\ldots,n-2\}\cup \{v_kv_n\}$,  where $k$ is a fixed integer from $\{2,\ldots, n-2\}$.

\begin{lemma}\label{Le3.3}
If $T$ be a tree of order $n\geq 10$ and diameter $n-2$, then $\theta_3(T)\leq 2$.
\end{lemma}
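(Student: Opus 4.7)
The plan is to exhibit an explicit graph $H$ of order $n+2$ that contains $T$ as an induced subgraph and is a $3$-ASC graph; this yields $\theta_3(T)\le 2$. Since $T$ is obtained from the path $P_{n-1}$ on $v_1,\ldots,v_{n-1}$ by attaching the pendant $v_n$ at an internal vertex $v_k$ (with $2\le k\le n-2$), and since $n-1\ge 9$, the natural starting point is the $3$-ASC embedding of $P_{n-1}$ produced by Lemma~\ref{L2.4}. That construction uses two added ``hub'' vertices $x$ and $y$, joined by the edge $xy$, with prescribed adjacencies to the path so that a designated pair $v_i,v_j$ forms the unique diametrical pair at distance $4$ while every other vertex has eccentricity $3$.

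I would form $H$ by taking this embedding of $P_{n-1}$ and adjoining the vertex $v_n$ with the edge $v_nv_k$ (so that the subgraph induced on $V(T)$ is exactly $T$), and then supplementing with edges from $v_n$ to both hubs $x$ and $y$ so that $v_n$ is forced to have eccentricity $3$ rather than inheriting a large eccentricity as a pendant. The verification then reduces to checking, for every vertex $u$, that $d_H(u,v_n)\le 3$ (which is immediate, since $v_n$ is at distance at most $2$ from every vertex of the underlying $P_{n-1}$-embedding via $x$ or $y$) and that the existing distances in the $P_{n-1}$-embedding — in particular $d(v_i,v_j)=4$ and $d(v_p,v_q)\le 3$ for the remaining pairs — are not shortened by the insertion of $v_n$.

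By the left–right symmetry of the path, we may assume $k\le \lfloor (n-1)/2\rfloor$, and for most values of $k$ the construction above works out of the box: the pendant $v_n$ creates no new short route between $v_i$ and $v_j$, so the diametrical pair survives. The delicate situation is when $v_k$ lies so close to the would-be diametrical pair that a route $v_i\to v_k\to v_n\to x\to v_j$ (or similar) shortens $d_H(v_i,v_j)$ to $3$. For these finitely many exceptional values of $k$, I would handle them by either relocating the diametrical pair of the path construction (choosing different hub-adjacencies so that the eccentricity-$4$ vertices lie far from $v_k$), or by attaching $v_n$ to only one of $x,y$ instead of both, and then re-verifying the eccentricities. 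The main obstacle is precisely this case analysis: one must certify, for every admissible $k\in\{2,\ldots,n-2\}$, that some local modification of the hub-adjacencies and of the neighbourhood of $v_n$ produces a graph in which exactly two vertices have eccentricity $4$, every other vertex has eccentricity $3$, and no vertex has eccentricity $\le 2$ or $\ge 5$. The hypothesis $n\ge 10$ provides the slack along the path needed for this to always be achievable.
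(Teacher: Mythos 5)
Your plan is essentially the paper's proof: the paper takes the same two-hub embedding of $P_{n-1}$ from Lemma~\ref{L2.4} (with diametrical pair $v_5,v_9$), joins $v_n$ to both $x$ and $y$ in the generic case, and resolves the exceptional attachment points exactly by the remedy you suggest — dropping one hub edge at $v_n$, namely $xv_n$ when $k=5$ (so that $d(v_5,v_9)$ stays equal to $4$) and $yv_n$ when $k=6$ (so that ${\rm ecc}(y)$ does not drop to $2$). The only difference is that you leave the identification of the exceptional values of $k$ and the eccentricity verification as a to-do, whereas that finite check is precisely the content of the paper's (admittedly also terse) argument.
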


\begin{proof}
 Based on the value of $k$ introduced above we distinguish two cases.

\medskip\noindent
\textbf{Case 1.} $k=5$.\\
 Let $G$ be the graph obtained from $T\cup P_2$, where $P_2=xy$, by adding edges $xv_i$,  $i\in \{1,2\}\cup \{7,\ldots, n-1\}$ and $yv_j$,  $j\in \{1,4\}\cup \{10,\ldots, n-1,n\}$. Then it is straightforward to verify that $G$ is a $3$-ASC embedding graph of $T$ where $v_5,v_9$ are two diametrical vertices in  $G$.

\medskip\noindent
\textbf{Case 2.} $k\neq 5$. \\
In this case we can construct a similar graph $H$ as in Case 1, but now one more edge $xv_n$ is added to $H$ (except that $yv_n\notin E(G)$ for $k=6$ otherwise ${\rm ecc}_G(y)=2$). It is not difficult to check that $H$ is a $3$-ASC embedding graph of $T$ with $v_5,v_9$  being its unique diametrical vertices.
\end{proof}

\begin{lemma}\label{Le3.4}
If $T$ is a tree of order $n\geq 10$ and diameter $n-2$, then $\theta_3(T)\geq 2$.
\end{lemma}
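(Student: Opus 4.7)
Assume for contradiction that there is a $3$-ASC graph $G$ with $V(G) = V(T) \cup \{x\}$ in which $T$ is an induced subgraph. Then $\mathrm{rad}(G) = 3$, $\mathrm{diam}(G) = 4$, and exactly two vertices of $G$ have eccentricity $4$. Let $A = N_G(x) \cap V(T)$. Since $T$ is induced in $G$, for any two $u, w \in V(T)$ we have
$d_G(u, w) = \min\{d_T(u, w),\ d_T(u, A) + 2 + d_T(A, w)\}$,
where the second quantity is the length of a detour through $x$ using two distinct entry/exit vertices from $A$. The strategy is to mimic the path argument of Lemma~\ref{L2.5}, paying attention to the extra leaf $v_n$.

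The first step is to exploit the pair $(v_1, v_{n-1})$, whose $T$-distance equals the diameter $n - 2 \geq 8$. The inequality $d_G(v_1, v_{n-1}) \leq 4$ forces the existence of two distinct $v_i, v_{i'} \in A$ with $d_T(v_1, v_i) + d_T(v_{i'}, v_{n-1}) \leq 2$. Consequently $A$ must contain a vertex close to $v_1$ (in $\{v_1, v_2, v_3\}$, or $v_n$ when $k = 2$) and another close to $v_{n-1}$ (in $\{v_{n-3}, v_{n-2}, v_{n-1}\}$, or $v_n$ when $k \in \{n-3, n-2\}$). Analogous constraints on $A$ would be extracted from $d_G(v_1, v_n) \leq 4$ and $d_G(v_{n-1}, v_n) \leq 4$ whenever the corresponding tree distance exceeds $4$, that is, for suitable ranges of $k$.

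Next I would show that the diametrical pair of $G$ lies in $\{v_1, v_{n-1}, v_n\}$. Internal main-path vertices $v_j$ have two $T$-neighbors and, combined with the forced neighbors of $x$ near both ends of the path, cannot reach eccentricity $4$; and if $\mathrm{ecc}_G(x) = 4$, the concentration of $A$ near the ends makes either a vertex of $A$ itself or a common $T$-neighbor of two elements of $A$ acquire eccentricity at most $2$, contradicting the $3$-ASC hypothesis. A three-way case split on which pair of leaves is diametrical -- $\{v_1, v_{n-1}\}$, $\{v_1, v_n\}$, or $\{v_{n-1}, v_n\}$ -- then reduces the problem to a finite check per case.

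In each case I would solve the resulting system of distance and eccentricity constraints, extracting either a third vertex of eccentricity $4$ (usually a middle path vertex at $T$-distance more than $2$ from every element of $A$, or the leaf not in the diametrical pair) or a vertex of eccentricity $2$ (typically a common $T$-neighbor of two members of $A$). The main obstacle I anticipate is the bookkeeping across values of $k$: when $k \in \{2, 3, n-3, n-2\}$ the leaf $v_n$ partially substitutes for a path endpoint and the argument largely mirrors Lemma~\ref{L2.5} applied to the embedded $P_{n-1}$, whereas for $k$ nearer the middle, $v_n$ imposes independent long-distance constraints on $A$ that must be combined with those coming from $(v_1, v_{n-1})$. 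The guiding principle throughout is the same as in the path case: a single added vertex cannot simultaneously shortcut a tree path of length at least $8$ and produce the rigid eccentricity profile of a $3$-ASC graph.
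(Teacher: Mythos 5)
Your overall framework (argue by contradiction, use the distance--through--$x$ formula, exploit the long tree distances between the three leaves to force $N_G(x)$ to cluster near them, then finish with a finite case analysis) is the right kind of argument and is in the same spirit as the paper's proof, which however organizes its cases by the adjacencies $xv_n$ and $xv_k$ and by the eccentricities of $v_n$, $v_k$ and $x$, rather than by the identity of the diametrical pair. But as written the proposal has a genuine gap: the pivotal intermediate claim that the diametrical pair of $G$ must lie in $\{v_1,v_{n-1},v_n\}$ is only asserted, with the one-line justification that internal main-path vertices ``cannot reach eccentricity $4$.'' This is exactly where the work is. For instance, if $xv_n\notin E(G)$ then $v_n$ is pendant in $G$ and is forced to be one diametrical vertex, and its diametrical partner is a vertex $v_i$ with $d_G(v_k,v_i)=3$ --- typically an \emph{internal} vertex of the main path at tree-distance $3$ from $v_k$; the paper spends most of its Case~1 eliminating precisely such configurations, and nothing in your sketch rules them out. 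Likewise, the assertion that $\mathrm{ecc}_G(x)=4$ forces some vertex of eccentricity $2$ is not substantiated (in the paper the corresponding subcases are settled by exhibiting a vertex whose eccentricity is too \emph{large}, not too small, or a third vertex of eccentricity $4$).

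Beyond that, the proposal is explicitly a plan rather than a proof: the three-way case split is announced (``I would solve the resulting system of distance and eccentricity constraints'') but never carried out, and it is in those computations --- together with the $k$-dependent bookkeeping that you yourself flag as the main obstacle --- that essentially all the content of the lemma lies. Until the leaf-pair reduction is actually proved (or the configurations with an internal diametrical vertex are added to the case split) and the per-case contradictions are exhibited, the argument is not complete.
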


\begin{proof} Since $T$ is clearly not a $3$-ASC graph, it suffices to prove that $\theta_3(T)>1$. Suppose on the contrary that there exists a $3$-ASC embedding graph $G$ of $T$ with $V(G)=V(T)\cup\{x\}$. We distinguish the following two cases.

\medskip\noindent
\textbf{Case 1.} $xv_n\notin E(G)$.\\
In this case  $v_n$ is pendant in $G$ with $v_kv_n\in E(G)$. Note that $G$ is a $3$-ASC  graph. Then ${\rm ecc}_G(v_n)=4$ and ${\rm ecc}_G(v_k)=3$.

Suppose first that $xv_k\in E(G)$.  Let $v_i$ be the vertex with $d(v_n,v_i)=4$. Clearly, $d_T(v_k,v_i)\geq 3$ and $xv_i\notin E(G)$. If $v_j\in V(T)\setminus \{v_n,v_i\}$, then $d_G(v_n,v_j)\leq 3$ and $d_G(v_i,v_j)\leq 3$  and consequently $d_G(v_k,v_j)\leq 2$. Obviously, any vertex $v_j$ $(j\neq n, i)$ with $d_T(v_j,v_k)\geq 3$ must be adjacent to $x$ in $G$. If $d_T(v_i,v_k)>3$, then we get ${\rm ecc}_G(x)=2$, which is impossible because of the $3$-ASC property of $G$. Hence $d_T(v_k,v_i)=3$ and without loss of generality we may assume that $k<i$. First we claim that $xv_s\in E(G)$ for any vertex $v_s$ with $s<k$ and $d_T(v_k,v_s)\leq 2$. Otherwise, we have $d_G(v_s,v_i)=4$. Then ${\rm ecc}_G(v_s)\geq 4$, contradicting to the $3$-ASC property of $G$. Since ${\rm ecc}_G(x)=3$,  any vertex in the $(v_i,v_k)$-path except $v_k$ has degree $2$ in $G$. Then any vertex $v_p$ with $d_T(v_p,v_{k+2})>3$ has the property that $d_G(v_{k+2},v_p)=4$. Clearly, ${\rm ecc}_G(v_{k+2})$ and ${\rm ecc}_G(v_p)$ are more than $4$, contradicting to the $3$-ASC property of $G$, again.

Now we consider the subcase when $xv_k\notin E(G)$. From ${\rm ecc}_G(v_n)=4$, we find that there is at most one vertex $v_p$ with $d_G(v_n,v_p)=4$ and $d_G(v_n,v_j)\leq 3$ for any vertex $v_j$ with $j\neq p$. Considering that $v_n$ is pendant in $G$, all vertices except $v_p$ in $T$ are at distance at most $2$ from $v_k$ in $G$. But, combining the fact that $xv_k\notin E(G)$, we have $d_G(v_j,v_k)\geq 3$ for any vertex $v_j$ with $d_T(v_j,v_k)\geq 3$ with $j\neq p$. A contradiction occurs.

\medskip\noindent
\textbf{Case 2.} $xv_n\in E(G)$.\\
In this case we first assume that ${\rm ecc}_G(v_n)=4$. Then ${\rm ecc}_G(v_k)=3$ and there exists a vertex $v_i$ with $d_G(v_n,v_i)=4$, i.e., $d_G(v_k,v_i)=3$ and $d_G(x,v_i)=3$. If $i=1$, we find that $d_G(v_i,v_j)\geq 4$ for any vertex $v_j$ with $k+1\leq j\leq n-1$, which is impossible since $G$ is a $3$-ASC graph. By symmetry, $i\neq n-1$.  It follows that $v_i$ has two neighbors $v_{i-1}$ and $v_{i+1}$ in $T$. Moreover, neither of $v_{i-1}$ and $v_{i+1}$ is adjacent to $x$ in $G$.

Now we further consider the position of  the vertex $v_i$ in $T$. Clearly, $d_G(v_i,v_k)=3\leq d_T(v_i,v_k)$. If $d_T(v_k,v_i)>3$, then, from $d_G(x,v_i)=3$, we get $d_G(v_i,v_k)\geq 4$, which is a clear contradiction. Thus $d_T(v_k,v_i)=3$. If $xv_k\notin E(G)$,  then, assuming without loss of generality that $k<i$, we find that $d_G(v_k,v_{i+1})=4$, contradicting the $3$-ASC property of $G$. Next we deal with the subcase when $xv_k\in E(G)$. Note that $n\geq 10$.  There is at least one vertex $v_j$ in $T$ such that $d_T(v_i,v_j)\geq 4$. Then in this subcase we have $d_G(v_i,v_j)\geq 4$ whenever $v_jx\in E(G)$ or not. A contradiction comes again.

We have thus proved that ${\rm ecc}_G(v_n)=3$. Next we distinguish the following two subcases.

\medskip\noindent
\textbf{Subcase 2.1.} $xv_k\notin E(G)$. \\
In this subcase, we first prove ${\rm ecc}_G(x)=3$. If not, then we have ${\rm ecc}_G(x)=4$.  Then $x$ must lie on an induced cycle $C_8$ of $G$ where $v_i\in V(C_8)$ with $d_G(x,v_i)=4$, or on an induced $(x,v_i)$-path with $d_G(x,v_i)=4$ and $v_i$ being pendant in $G$. In the former case, we can find at least one vertex $v_j\in V(C_8)$ such that $d_G(v_n,v_j)\geq 4$. This is a clear contradiction. In the latter case, we claim that $v_i$ is the other diametrical vertex in $G$, that is, $d_G(v_i,v_s)\leq 3$ for any vertex $v_s$ with $1\leq s\leq n$. But this cannot be obeyed for any vertex $v_s$ with $d_T(v_s,v_i)>3$ for $n\geq 10$.

Next we  claim that ${\rm ecc}_G(v_k)=3$ and  assume that on the contrary ${\rm ecc}_G(v_k)=4$. Then there is exactly one vertex, say $v_i$, with $d_G(v_i,v_k)=4$. Without loss of generality, assume that $i>k$. Evidently, we have $d_T(v_i,v_k)\geq 4$ and $xv_i\notin E(G)$. Considering that ${\rm ecc}_G(v_n)={\rm ecc}_G(x)=3$, from the structure of $G$, we  observe that $d_G(x,v_i)=2$. From the fact that $d_G(v_i,v_j)\leq 3$ and $d_G(v_k,v_j)\leq 3$ for any vertex $v_j\in V(T)\setminus\{v_i,v_k\}$, we find that any vertex $v_j$ with $j\in\{m:\ m\leq k-1,\ \mbox{or}~ m\geq i+1\}$ must be adjacent to $x$ in $G$.

If $d_T(v_i,v_k)=4$, then $i=k+4$. Clearly, $xv_{k+2}\notin E(G)$. If not, we have ${\rm ecc}_G(x)=2$, which is impossible. Since $d_G(v_n,v_{k+3})\leq 3$, we get $xv_{k+3}\in E(G)$ from the fact that neither of $v_{k+2}$ and $v_{k+4}$ is adjacent to $x$ in $G$. Moreover, we have $xv_{k+1}\notin E(G)$. Otherwise, we get ${\rm ecc}_G(x)=2$, which is impossible. Then $v_{k+1}$ is just the eccentric vertex of $x$ in $G$.  Thus there must be a vertex $v_j$ $(j\neq k+3,k+4)$ with $d_T(v_k,v_j)\geq 3$ such that $d_G(v_{k+1},v_j)=4$, a contradiction.

If $d_T(v_i,v_k)>4$, we have  $xv_j\in E(G)$ for any vertex $v_j$ with $k+3<j<i$, since $d_G(v_k,v_j)\leq 3$.  Moreover, neither of $v_{k+1}$ and $v_{k+2}$ is adjacent to $x$ in $G$,  for otherwise ${\rm ecc}_G(x)=2$ would hold. But then $d_G(v_i,v_{k+1})\geq 4$, a contradiction.

 We have thus proved that ${\rm ecc}_G(x)={\rm ecc}_G(v_k)=3$. From ${\rm ecc}_G(v_k)=3$ we find that $xv_j\in E(G)$ for any vertex $v_j$ with $d_T(v_j,v_k)\geq 4$.  Because ${\rm ecc}_G(x)=3$, we infer that there exist three consecutive vertices, say $v_{p-1},v_p,v_{p+1}$,  such that they are at distance at most $3$ to $v_k$ and that none of them is adjacent to $x$ in $G$. It implies that $d_G(x,v_p)=3$. Since $n\geq 10$, there are at least two vertices $v_j$ with $d_G(v_j,v_p)=4$, contradicting to the $3$-ASC property of $G$.

\medskip\noindent
\textbf{Subcase 2.2.} $xv_k\in E(G)$.\\
In this subcase we first assume that ${\rm ecc}_G(x)=3$. Then there exists a vertex $v_i$ with $d_G(x,v_i)=3$. We may again assume without loss of generality that $i>k$. If $v_i$ is pendant in $G$, then there are at least two vertices at distance at least $4$ from $v_i$ in $G$, contradicting to the $3$-ASC property of $G$. If $v_i$ is non-pendant in $G$, then there are three consecutive vertices $v_{i-1}$, $v_i$, $v_{i+1}$, such that none of them is adjacent to $x$ in $G$ and $d_G(x,v_i)=3$ with $d_T(v_i,v_k)\geq 2$.  If $d_T(v_k,v_i)\geq 3$, we find that $d_G(v_n,v_i)=4$, which is impossible because of ${\rm ecc}_G(v_n)=3$. While $d_T(v_i,v_k)=2$, for $n\geq 10$, there exist at least two vertices $v_p$ with $d_T(v_i,v_p)\geq 4$ such that $d_G(v_i,v_p)\geq 4$. This contradicts to the $3$-ASC property of $G$. So it follows that ${\rm ecc}_G(x)=4$.

If ${\rm ecc}_G(x)=4$, then there is a vertex $v_p$ with $d_G(x,v_p)=4$. Obviously, $d_T(v_k,v_p)\geq 3$. Considering that $v_n,v_k,x$  form a triangle in $G$, we  get $d_G(v_n,v_p)\geq 4$,  the final contradiction.
\end{proof}

 Therefore Lemmas~\ref{Le3.3} and~\ref{Le3.4} immediately imply:

\begin{theorem}\label{thm:trees}
If $T$ is a tree of order $n\geq 10$ and diameter $n-2$, then $\theta_3(T)=2$.
\end{theorem}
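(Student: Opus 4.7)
The theorem follows directly by combining the two preceding lemmas, so my plan is simply to quote them: Lemma~\ref{Le3.3} supplies the upper bound $\theta_3(T)\le 2$ by an explicit construction, and Lemma~\ref{Le3.4} supplies the matching lower bound $\theta_3(T)\ge 2$. Since $T$ itself is plainly not a $3$-ASC graph (a tree of diameter $n-2$ on $n\ge 10$ vertices has far more than two non-central vertices), the two inequalities together pin down $\theta_3(T)=2$ and nothing further is needed.

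To recap the flavour of the two sides: for the upper bound one adds two new vertices $x,y$ to $T$ and attaches them to a carefully chosen set of path vertices so that $v_5$ and $v_9$ remain the only two non-central vertices of the resulting graph, every other vertex having eccentricity exactly $3$. Because $T$ differs from $P_{n-1}$ only by the pendant leaf $v_n$ at $v_k$, the construction used for paths has to be tweaked according to $k$; it is natural to treat $k=5$ separately from $k\ne 5$, with a small modification (dropping the edge $yv_n$) required at $k=6$ to prevent $y$ from acquiring eccentricity $2$.

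For the lower bound one argues by contradiction, assuming a single added vertex $x$ produces a $3$-ASC graph $G$ and splitting on whether $xv_n\in E(G)$. When $xv_n\notin E(G)$, the leaf $v_n$ stays pendant and must be one of the two diametrical vertices, forcing ${\rm ecc}_G(v_n)=4$ and ${\rm ecc}_G(v_k)=3$; tracing which $v_i$ can realise distance $4$ from $v_n$ then violates either the $3$-ASC property or the hypothesis $n\ge 10$. When $xv_n\in E(G)$ one subdivides further on whether $xv_k\in E(G)$ and locates three consecutive path vertices that must remain non-adjacent to $x$; these consecutive vertices in turn create a pair at distance $\ge 4$ away from the intended diametrical pair, producing the contradiction.

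The real obstacle sits entirely in the lower-bound bookkeeping of Lemma~\ref{Le3.4}: the single added vertex $x$ is simultaneously required to serve as eccentric vertex for many would-be central vertices while not collapsing anyone else's eccentricity to $2$, and the margin provided by $n\ge 10$ is thin enough that essentially every configuration must be ruled out separately. Once that case analysis is in hand, the deduction of Theorem~\ref{thm:trees} itself is a one-line consequence of Lemmas~\ref{Le3.3} and~\ref{Le3.4}.
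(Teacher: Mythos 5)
Your proposal is correct and is exactly the paper's argument: the theorem is stated immediately after Lemmas~\ref{Le3.3} and~\ref{Le3.4} and follows by combining the upper bound $\theta_3(T)\le 2$ from the former with the lower bound $\theta_3(T)\ge 2$ from the latter. Your recap of the two lemmas' constructions and case analysis also matches what the paper does, so nothing further is needed.
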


In Theorem~\ref{thm:trees} we did not consider trees of small order. Note that $K_{1,3}$ is the unique tree of the smallest order of interest (that is, of order $4$). For $n=5$ there is also a unique tree of interest, while for $n=6$ there are two non-isomorphic trees of diameter $4$. Any of these trees is an induced subgraph of $C_6^*$ which is in turn a smallest $3$-ASC graph, hence we have the $3$-ASC index for $4\le n\le 6$.  However, for $7\le n\le 9$ the variety of the trees of interest becomes larger, hence determining the $3$-ASC index for all of them would be too extensive to be done here.

\section{Three open problems}
\label{sec:problems}

 We conclude the paper with three open problems. First, in view of Theorems~\ref{thm:constuction} and \ref{index-2r} we pose the following very ambitious task:

\begin{problem}
\label{prob:characterize}
For any given integer $k\in [1, 5]$ characterize the graphs with $\theta_3(G) = k$.
\end{problem}

 Next, in view of Lemma \ref{lem:3-or-4} and Theorems \ref{thm:new-added}-\ref{thm:union-of-complete}, it is natural to pose:

\begin{problem}
\label{prob:3-or-4}
Characterize the graphs of diameter $2$ with the $3$-ASC index equal to $3$ (equivalently, with the $3$-ASC index equal to $4$).
\end{problem}

A classical result from random graph theory asserts that almost any graph has diameter $2$, cf.~\cite[p.~312, Exercise~7]{diestel-2006}. Since by Lemma~\ref{lem:3-or-4} the $3$-ASC index of such a graph is either $3$ or $4$, our third problem reads as follows.

\begin{problem}
\label{prob:almost-all}
Is it true that the $3$-ASC index of almost every graphs is $3$ (resp.\ $4$)?
\end{problem}

\section*{Acknowledgements}

 K. X. is supported by NNSF of China (No. 11671202), China Postdoctoral Science Foundation (2013M530253, 2014T70512). K. C. D. is supported by National Research Foundation funded by the Korean government with the grant no. 2013R1A1A2009341. S. K.  acknowledges the financial support from the Slovenian Research Agency (research core funding No.\ P1-0297).

\end{document}